\definecolor{mygray}{gray}{.9}
\definecolor{mypink}{rgb}{.99,.91,.95}
\definecolor{mycyan}{cmyk}{.3,0,0,0}
\def\argmin{\operatornamewithlimits{arg\ min}}
\def\diam{\operatornamewithlimits{diam}}
\def\T{{\mathcal T}}
\def\D{{\mathcal D}}
\def\E{{\mathcal E}}
\def\argmin{\arg\min}
\def\d{{\rm div}}
\def\bn{{\bf n}}
\def\bv{{\bf v}}
\def\pT{{\partial T}}
\def\pD{{\partial D}}
\def\3bar{{|\!|\!|}}
\newtheorem{algorithm}{Algorithm}[section]
\def\argmin{\operatornamewithlimits{arg\ min}}
\title{A High order Conservative Flux Optimization Finite Element Method for Diffusion Equations}
\author{Yujie Liu\thanks{Center for Quantum Computing, Peng Cheng Laboratory, Shenzhen 518005, China; Center for Mathematical Sciences, Huazhong University of Science \& Technology, Wuhan, China; (liuyj02@pcl.ac.cn). The research of Liu was partially supported by Guangdong Provincial Natural Science Foundation (No. 2017A030310285), Shandong Provincial natural Science Foundation (No. ZR2016AB15) and Youthful Teacher Foster Plan Of Sun Yat-Sen University (No. 171gpy118),}
\and Yue Feng \thanks{Department of Mathematics, Jilin University, Changchun,
China (yuefeng19@mails.jlu.edu.cn). The research of this author was supported in
part by China Natural National Science Foundation (91630201, U1530116, 11726102, 11771179, 93K172018Z01, 11701210, JJKH20180113KJ, 20190103029JH), and by the Program for Cheung Kong Scholars of Ministry of Education of China, Key Laboratory of Symbolic Computation and Knowledge Engineering of Ministry of Education, Jilin University, Changchun, 130012, P.R. China,}
\and Ran Zhang \thanks{Department of Mathematics, Jilin University, Changchun,
China (zhangran@jlu.edu.cn).}
}
\begin{document}

\maketitle

\begin{abstract}
This article presents a high order conservative flux optimization (CFO) finite element method for the elliptic diffusion equations.
The numerical scheme is based on the classical Galerkin finite element method enhanced by a flux approximation on the boundary of a prescribed set of arbitrary control volumes (either the finite element partition itself or dual voronoi mesh, etc).
The numerical approximations can be characterized as the solution of a constrained-minimization problem with constraints given by the flux conservation equations on each control volume.
The discrete linear system is a typical saddle-point problem, but with less number of degrees of freedom than the standard mixed finite element method, particularly for elements of high order.
Moreover, the numerical solution of the proposed scheme is of super-closeness with the finite element solution.
Error estimates of optimal order are established for the numerical flux as well as the primary variable approximations.
We present several numerical studies in order to verify convergence of the CFO schemes.
A simplified two-phase flow in highly heterogeneous porous media model problem will also be presented.
The numerical results show obvious advantages of applying high order CFO schemes.
\end{abstract}

\begin{keywords} conservative flux, high order, primal-dual weak Galerkin, finite element methods.
\end{keywords}

\begin{AMS}
Primary, 65N30, 65N15, 65N12; Secondary, 35B45, 35J50, 76S05, 76T99, 76R99
\end{AMS}

\pagestyle{myheadings}

%

\section{Introduction}
This paper is concerned with the development of high order numerical methods for partial differential equations that maintain important conservation properties for the underlying physical variables. For simplicity, consider the elliptic diffusion equation that seeks an unknown function $u=u(x)$ satisfying
\begin{equation}\label{EQ:Elliptic}
\left \{\begin{split}
-\nabla\cdot(\alpha\nabla u)&=f,\qquad {\rm in}\  \Omega\\
u&=g, \qquad {\rm on}\ \partial\Omega.
\end{split}\right.
\end{equation}
where $\Omega\subset {\bf R}^d (d=2,3)$ is a bounded polygonal ($d=2$) or polyhedral ($d=3$) domain with boundary $\partial\Omega$, and $\alpha=\{\alpha_{i,j}\}_{d\times d}$ is a symmetric, positive definite tensor; i.e., there exists a positive constant $\alpha_0$ such that
$$
\xi^T\alpha\xi\ge \alpha_0 \xi^T\xi, \qquad \forall  \; \xi \in \Omega.
$$

Let $\bv = - \alpha \nabla u$, the first equation of system (\ref{EQ:Elliptic}) can be rewritten as :
\begin{eqnarray}
\nabla\cdot\bv &=& f. \label{EQ:system-linear-eq2}
\end{eqnarray}
Consider an arbitrary control element $D$ in $\Omega$. By the divergence theorem, the equation (\ref{EQ:system-linear-eq2}) gives $\int_{\partial D} \bv\cdot\bn ds = \int_D f dx,$
where $\bn$ is the outward normal vector of $\partial D$. Define the flux variable $q =\bv\cdot\bn_{e}$ on the boundary of $D$, where $\bn_{e}$ is the normal vector of $\partial D$ with prescribed direction.
We have
\begin{equation}\label{EQ:conservation-eq2}
 \int_{\partial D} q  \bn\cdot\bn_{\partial D}ds = \int_D f dx.
\end{equation}
Equation \eqref{EQ:conservation-eq2} is the local mass conservation property, which is of key importance for many applications.
The readers may refer to \cite{LiuWang_SINUM_2017} for a detailed introduction.

The classical finite element method (FEM) seeks $u_h\in S_h$ such that
\begin{equation}\label{EQ:FEarg}
u_h=\argmin_{v\in S_h} J(v), \; J(v)=\frac{1}{2}a(v,v)-(f,v),
\end{equation}
where $S_h$ consists of $C^0$ piecewise polynomials of degree $k\geq 1$ on prescribed finite element partition $\T_h=\{T\}$ (Fig. \ref{fig:mesh}).
By taking the Fr\'echet derivative, the FEM approximation is given as the solution of :
\begin{equation}\label{EQ:FE}
(\alpha\nabla u_h,\nabla v)=(f,v),\; \forall v\in S_h.
\end{equation}
The FEM is easy to implement and is applicable to general domains.
The theory is also well-established.
However, since $\alpha\nabla u_h$ is usually not continuous from element to element,
the FEM can not give a locally conserved approximation of the flux variable directly.

In the literature, one can find various numerical methods designed for \eqref{EQ:Elliptic} that preserve the mass conservation property locally on each element $T\in\T_h$. One of such methods is the finite volume method (FVM) widely used in scientific computing for problems in science and engineering, including fluid dynamics \cite{Barth.T;Ohlberger2004,Emonot1992,EymardGallouetHerbin2000,LeVeque2002,Li.R2000,
Nicolaides1995,Shu2003}.
Most algorithms in FVM enjoy the nice feature of algorithmic simplicity and computational efficiency, and some of the low order FVMs (e.g., $P_0$ and $P_1$ schemes) have been well studied for their mathematical convergence and stability \cite{Bank.R;Rose.D1987,Cai.Z1991,ChouLi2000,Hackbusch.W1989a,Li.R2000,Suli_MCOM_1992,Yubo2012}. It should also be noted that the high order and symmetric FVMs are generally challenging in theory and algorithmic design \cite{CaiDouglasPark2003,Chen2010,ChenWuXu2012,LinYangZou2015}. In the finite element context, several conservative numerical schemes have been developed. The mixed finite element method \cite{RaviartThomas, bdm,Carstensen_MCAMS_1997}, the discontinuous Galerkin finite element method \cite{Arnold_SIAMJNA_2002}, the hybridizable discontinuous Galerkin \cite{Nguyen_JCP_2009}, and weak Galerkin finite element methods \cite{WangYe_2013, wy3655, WangWang_2016} are a few of such examples that give numerical approximations with conservative numerical flux. A first order conservative flux optimization (CFO) method \cite{LiuWang_SINUM_2017} has been proposed recently via a conservation-constrained optimization approach by using the continuous finite element space.
The scheme has been employed in the simulation of a simplified two-phase flow problem in highly heterogeneous porous media, which show the effectiveness and robustness of the scheme. In this paper, we propose a high order CFO scheme by modifying the continuous finite element space. This new scheme is locally conservative and can provide conservative high order flux on arbitrary control volumes.
Moreover, it has less degree of freedoms than mixed finite elements methods (especially for high order elements).
We established optimal order error estimates for both the primal and the flux variables.
Numerical simulation of the two-phase flow problem in highly heterogeneous porous media show obvious advantages of applying the high order CFO schemes.

The paper is organized as follows. In Section \ref{sectionccfv}, we present the high order conservative flux optimization finite element scheme for the model problem \eqref{EQ:Elliptic}. In Section \ref{sectionWpS}, we establish a result on the well-posedness and stability for the high order conservative flux optimization scheme. In Section \ref{sectionEE}, we derive some error estimates for the resulting numerical approximations in various Sobolev norms. Finally, in Section \ref{numerical-experiments}, we present several numerical results to demonstrate the efficiency and accuracy of the new scheme.

\section{A High Order Conservative Flux Optimization Scheme}\label{sectionccfv}
Let $\T_h$ be a regular finite element triangular partition of the polygonal domain $\Omega\subset\mathbb{R}^2$ (Fig. \ref{fig:mesh}--solidline). Assume that we have a set of control volumes $\D_h\coloneqq\{D\}$ as illustrated in (Fig. \ref{fig:mesh}--blue domain), with $\partial \D_h$ the set of edges of $D \in \D_h$. We note that $\D_h$ is arbitrary and can be completed independent of the finite element partition $\T_h$. In practical computation, the set of control volumes could be taken either as the dual Voronoi mesh or the finite element partition $\T_h$ itself. Denote by $h\coloneqq \max_{{D} \in \D_h}h_{D}$ the meshsize of $\D_h$, where $h_{D}=\diam({D})$ is the diameter of the element ${D} \in \D_h$. Denote by $\E_h$ the edge set of $\D_h$, and $\E_h^0 \subset \E_h$ the set of all interior edges. The set of boundary edges is denoted as $\E_h^B\coloneqq\E_h \backslash\E_h^0$. The diameter of the edge $e\in \E_h$ is denoted as $h_e=\diam(e)$. For convenience, for each $e\in\E_h$ we assign a normal direction $\bn_e$ which provides an orientation for $\E_h$.
\begin{figure}
\centering
~~~~~~\includegraphics [width=0.7\textwidth]{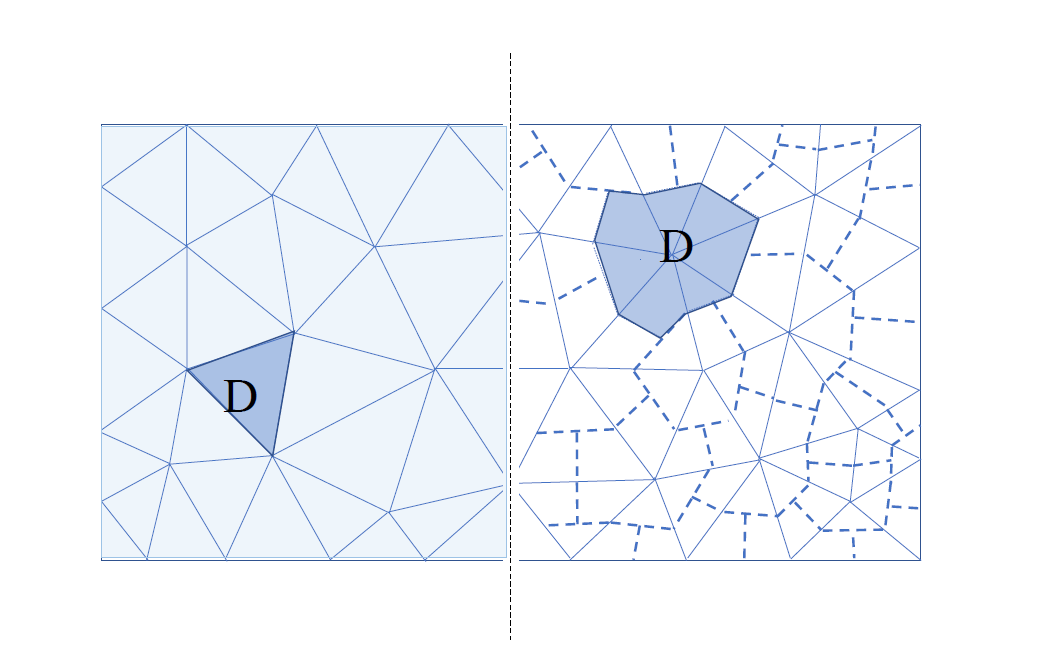}
\caption{Illustration of the control volumes which would be either the finite element triangular partition (left, solidline) or control volumes of the dual Voronoi mesh (right, dotted line) of domain $\Omega$.}
\label{fig:mesh}
\end{figure}

Let $S_h \subset H_0^1(\Omega)$ be the finite element space, that is the classical {\color{black} $C^0$-conforming} element given by
\[
S_h=\{v\in C^0(\Omega): \ v|_{T}\in {P}_k(T), \forall {T}\in\T_h, v|_{\partial\Omega}=0\}.
\]
Here $C^0(\Omega)$ stands for the space of continuous functions in the domain $\Omega$. Denote by $V_h$ the flux space consisting of  piecewise polynomial of degree $k-1$ on $\E_h$ and $W_h$ the space of piecewise constant functions on $\E_h$ given as follows:
\[
V_h=\{q \in C^{-1}(\E_h): \ \textcolor{black}{q|_{e}\in {P}_{k-1}(e)}, \forall {e}\in\E_h\},
\]
\[
W_h=\{\sigma:  \sigma|_{D}\in {P}_0(D), \forall {D}\in\D_h\}.
\]
See Figure \ref{fig:triangle} for an illustration of spaces $S_h$ and $V_h$ where the control volume $D$ coincides with the finite element triangular element $T$.
\begin{figure}
\begin{center}
\includegraphics [width=0.4\textwidth]{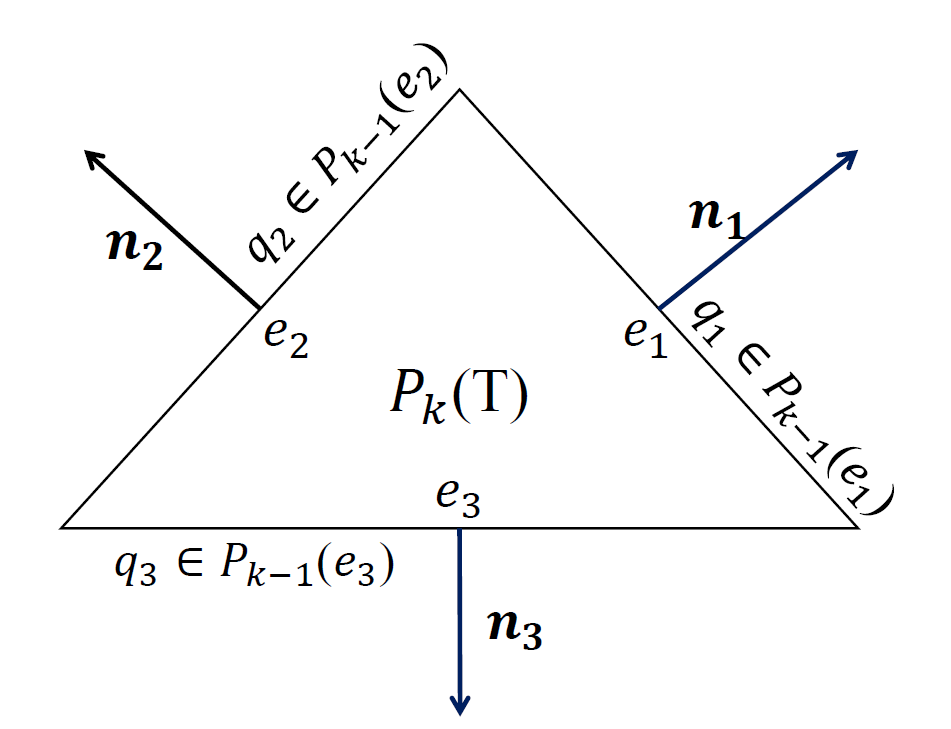}
\end{center}
\caption{An illustrative triangular element with local flux}
\label{fig:triangle}
\end{figure}
For any $q\in L^2(\E_h)$, denoted by $\nabla_w\cdot q$ the discrete weak divergence given as a function in $W_h$ such that on each $D\in\D_h$
\begin{equation}\label{EQ:001}
(\nabla_w\cdot q)|_D =\frac{1}{|D|}\int_{\partial D} q \bn\cdot {\bf n}_e ds.
\end{equation}
A flux function $p\in V_h$ is said to be {\it locally conservative} if it satisfies
\begin{equation}\label{EQ:conservation_fv}
(\nabla_w\cdot p, w)=(f, w),\qquad \forall w\in W_h.
\end{equation}

The classical Ritz-Galerkin finite element method seeks $R_h u\in S_h$ such that
\begin{eqnarray}\label{EQ:Ritz-Galerkin-1}
(\alpha\nabla R_h u, \nabla v) =(f,v), \forall v \in S_h.
\end{eqnarray}
A straightforward numerical flux would be given by $q_h^* = - \alpha \nabla (R_hu) \cdot \bn_e$ on each edge $e\in \E_h$, but this numerical flux function $q_h^*$ is usually discontinuous across each edge $e\in \E_h$ and/or has difficulty to be locally conservative on each control element $D$. A remedy to this challenge is to find a pair $(u_h;q_h)\in S_h\times V_h$ that satisfies the mass conservation equation (\ref{EQ:conservation_fv}) while the flux error $q_h + (\alpha \nabla u_h \cdot \bn_e)$ is minimized in a metric at the user's discretion. To this end, we introduce a functional in the space $S_h\times V_h$ as follows:
\begin{equation}\label{EQ:functional}
J_{r,\beta}(v,p):= \frac{1}{r}\sum_{D\in\D_h}h_D^\beta \sum_{e\subset\partial D}  \int_e |p+\alpha^*\nabla v \cdot \bn_e|^r ds {\color{black} + \frac12 (\alpha \nabla v, \nabla v) - (f,v)},
\end{equation}
where $r\in [1,\infty)$ is a prescribed value, $\beta$ is a parameter with real value. We note that $\alpha^*$ is the trace of $\alpha$ from  inside of $D$ to $\pD$ in case $\alpha$ is discontinuous at $\pD$, we write $\alpha$ instead of $\alpha^*$ for simplicity in the following. Our numerical algorithm then seeks $(u_h;q_h)\in S_h\times V_h$ which minimizes the functional $J_r$ under the constraint (\ref{EQ:conservation_fv}).
\begin{algorithm}
Find $u_h\in S_h$ and $q_h\in V_h$ such that
\begin{eqnarray}\label{EQ:argmin}
&&(u_h;q_h)=\argmin_{v\in S_h, p\in V_h, (\nabla_w\cdot p, w)=(f, w), \forall w\in W_h} J_{r,\beta}(v,p).
\end{eqnarray}
\end{algorithm}

Our new method essentially looks for a conservative flux variable that best approximates the obvious, but non-conservative numerical velocity $q_h = - \alpha\nabla u_h \cdot \bn_e $ in a discrete metric. Following \cite{LiuWang_SINUM_2017}, the scheme is also named as {\em Conservative Flux Optimization (CFO)} finite element method in this article. It should be pointed out that the {\em CFO} finite element method was originally motivated by the idea of the primal-dual weak Galerkin method (namely, PDE-constraint minimization of stabilizers) presented as in \cite{WangWang_2016} for the second order elliptic equation in non-divergence form.


The Euler-lagrange formulation for the scheme \eqref{EQ:argmin} seeks $(u_h;q_h;\lambda_h)\in S_h\times V_h\times M_h$ such that
{\color{black}
\begin{eqnarray}\label{min-LagrangeForm-1}
 \langle D J_{r,\beta}(u_h;q_h), (v;p)\rangle + (\nabla_w\cdot p, \lambda_h)&=& 0,\qquad \forall\; (v;p)\in S_h\times V_h,\\
(\nabla_w\cdot q_h, w) & = & (f,w),\qquad \forall\; w\in W_h, \label{min-LagrangeForm-2}
\end{eqnarray}
where
\begin{equation}\label{EQ:FDerivative-1}
\begin{split}
\langle D J_{r,\beta}(u_h;q_h), (v;p)\rangle :=& \sum_{T\in\D_h} h_D^\beta \sum_{e\subset\pD} \int_e {\color{black} (q_h+\alpha\nabla u_h \cdot \bn_e)( p+ \alpha\nabla v \cdot \bn_e)} ds\\
& \ + (\alpha \nabla u_h, \nabla v) - (f,v),\\
=& \sum_{T\in\D_h} h_D^\beta \sum_{e\subset\pD} \langle q_h+\alpha\nabla u_h \cdot \bn_e, p+ \alpha \nabla v \cdot \bn_e \rangle_e\\
 & \ + (\alpha \nabla u_h, \nabla v) - (f,v)
 \end{split}
\end{equation} }
is the Fr\'echet derivative of the functional $J_{r,\beta}$ at $(u_h;q_h)$ along the direction of $(v;p)$.
By introducing the following bilinear form
\begin{equation}\label{EQ:FDerivative-2}
s_h((u_h;q_h), (v;p)):=  (\alpha\nabla u_h, \nabla v) + \sum_{D\in\D_h} \sum_{e\subset\pD} h_D^\beta \langle q_h+\alpha\nabla u_h \cdot \bn_e, p+\alpha\nabla v \cdot \bn_e \rangle_e,
\end{equation}
we can rewrite the Euler-Lagrange equations (\ref{min-LagrangeForm-1})-(\ref{min-LagrangeForm-2}) as follows
\begin{eqnarray}
s_h((u_h;q_h), (v;p)) + (\nabla_w\cdot p, \lambda_h)&=& (f,v),\qquad \forall (v;p)\in S_h\times V_h,\label{min-LagrangeForm2-1}\\
(\nabla_w\cdot q_h, w) & = & (f,w),\qquad \forall w\in W_h. \label{min-LagrangeForm2-2}
\end{eqnarray}

The number of degrees of freedom (dof) for the scheme (\ref{min-LagrangeForm2-1})-(\ref{min-LagrangeForm2-2}) on a triangular element $T$ is:
\begin{eqnarray}
 N_{Dof} &=&N_{Dof}(P_{k} (T)) + \sum_{e\in\pT} N_{Dof}(P_{k-1} (e)) + N_{Dof}(P_{0} (T)) \\
         &=&\frac{(k+1)(k+2)}{2} + 3k +1, \nonumber
\end{eqnarray}
which is smaller than that of the Raviart-Thomas element $(k+1)(k+3)$ \cite{RaviartThomas_MFE_1977} and the Brezzi-Douglas-Marini (BDM) element $(k+1)(k+2)$ \cite{Carstensen_MCAMS_1997} for $k\geq3$, see Table \ref{tableDOF} for detail.

\begin{table}[h]
\caption{Number of dof for different methods on a triangle element $T$}\label{tableDOF}
\begin{center}
\begin{tabular}{||c|c|c|c||}
\hline
Order &  CFO  &  RT & BDM \\
\hline
$1$ & $ 7 $ & $8$  & $6$ \\
\hline
$2$ & $13 $ & $15$ & $12$\\
\hline
$3$ & $20 $ & $24$ & $20$ \\
\hline
$\vdots$ & $\vdots$ & $\vdots$ &$ \vdots$ \\
\hline
$k$ & $\frac{(k+1)(k+2)}{2} + 3k +1$ & $(k+1)(k+3)$ &  $(k+1)(k+2)$\\
\hline
\end{tabular}
\end{center}
\end{table}

\section{Solution Existence and Uniqueness}\label{sectionWpS}
In this section, we shall study the solution existence and uniqueness for the CFO-FEM scheme (\ref{EQ:argmin}) with $r=2$. Note that the corresponding Euler-Lagrange formulation is a system of linear equations given by (\ref{min-LagrangeForm2-1})-(\ref{min-LagrangeForm2-2}).

\begin{theorem}
Assume that the coefficient $\alpha$ is in $L^\infty(\D_h)$ and uniformly positive definite. Then the numerical scheme (\ref{min-LagrangeForm2-1})-(\ref{min-LagrangeForm2-2}) has one and only one solution $(u_h;q_h;\lambda_h)\in S_h \times V_h\times W_h$.
\end{theorem}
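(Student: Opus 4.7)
The plan is to exploit that \eqref{min-LagrangeForm2-1}--\eqref{min-LagrangeForm2-2} is a square linear system in the finite dimensional space $S_h\times V_h\times W_h$: the trial space has the same dimension as the space of test functions $(v;p;w)\in S_h\times V_h\times W_h$. Hence it suffices to prove uniqueness, i.e.\ that the homogeneous problem (with $f\equiv 0$) admits only the trivial solution. I would first derive an energy identity, then peel off the three unknowns one at a time.

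\textbf{Step 1 (energy identity).} Assume $f=0$ and let $(u_h;q_h;\lambda_h)$ solve the homogeneous system. Take $(v;p)=(u_h;q_h)$ in \eqref{min-LagrangeForm2-1} and $w=\lambda_h$ in \eqref{min-LagrangeForm2-2}. Subtracting eliminates the Lagrange multiplier and yields
\begin{equation*}
s_h\bigl((u_h;q_h),(u_h;q_h)\bigr)=0.
\end{equation*}

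\textbf{Step 2 (vanishing of $u_h$ and $q_h$).} From the definition \eqref{EQ:FDerivative-2},
\begin{equation*}
s_h\bigl((u_h;q_h),(u_h;q_h)\bigr)=(\alpha\nabla u_h,\nabla u_h)+\sum_{D\in\D_h}\sum_{e\subset\pD}h_D^{\beta}\|q_h+\alpha\nabla u_h\cdot\bn_e\|_{e}^{2},
\end{equation*}
a sum of non-negative terms since $\alpha$ is uniformly positive definite. Hence each term is zero. Positive definiteness gives $\nabla u_h\equiv 0$, and because $u_h\in S_h\subset H_0^1(\Omega)$ the boundary condition forces $u_h\equiv 0$. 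Substituting back into the jump terms yields $q_h|_e=0$ on every $e\in\E_h$.

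\textbf{Step 3 (vanishing of $\lambda_h$).} With $u_h=0,q_h=0$, equation \eqref{min-LagrangeForm2-1} reduces to
\begin{equation*}
(\nabla_w\cdot p,\lambda_h)=0,\qquad\forall\, p\in V_h.
\end{equation*}
Unfolding the definition \eqref{EQ:001} and regrouping the boundary integrals edge by edge gives, for any $p\in V_h$,
\begin{equation*}
(\nabla_w\cdot p,\lambda_h)=\sum_{e\in\E_h^{0}}[\lambda_h]_e\int_e p\,ds+\sum_{e\in\E_h^{B}}(\pm\lambda_h|_{D_e})\int_e p\,ds,
\end{equation*}
where $[\lambda_h]_e=\lambda_h|_{D_1}-\lambda_h|_{D_2}$ is the jump across the interior edge $e=\partial D_1\cap\partial D_2$ oriented by $\bn_e$, and $D_e$ is the unique control volume containing a boundary edge $e$. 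Since $k\ge 1$, the space $V_h$ contains all edgewise constants, so I may prescribe $p|_e$ to be an arbitrary constant $c_e$ on each edge independently. Testing with $c_e=[\lambda_h]_e$ on interior edges and $c_e=\pm\lambda_h|_{D_e}$ on boundary edges yields
\begin{equation*}
\sum_{e\in\E_h^{0}}|e|\,[\lambda_h]_e^{2}+\sum_{e\in\E_h^{B}}|e|\,\lambda_h|_{D_e}^{2}=0.
\end{equation*}
Therefore $\lambda_h$ has zero jump across every interior edge (hence is globally constant on the connected mesh $\D_h$), and vanishes on every control volume touching $\partial\Omega$, forcing $\lambda_h\equiv 0$.

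The main technical obstacle is Step 3: the coercivity of $s_h$ only controls $(u_h;q_h)$, so the Lagrange multiplier must be eliminated by an inf-sup style argument on the divergence pairing. The key observation that unlocks the argument is that $P_0(e)\subset P_{k-1}(e)$ for $k\ge 1$ and $V_h$ carries no continuity or boundary constraint across edges, allowing edgewise independent test functions that separate the interior jumps of $\lambda_h$ and pick up its boundary values.
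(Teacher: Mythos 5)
Your Steps 1 and 2 coincide with the paper's argument: the scheme is a square linear system, so uniqueness of the homogeneous problem suffices, and testing \eqref{min-LagrangeForm2-1}--\eqref{min-LagrangeForm2-2} with $(u_h;q_h)$ and $\lambda_h$ kills the multiplier and gives $s_h((u_h;q_h),(u_h;q_h))=0$, hence $u_h\equiv 0$ and $q_h\equiv 0$. Your Step 3, however, takes a genuinely different route. The paper eliminates $\lambda_h$ by solving the auxiliary problem $-\nabla\cdot(\alpha\nabla\phi)=\lambda_h$, $\phi\in H_0^1(\Omega)$, and testing with $p|_e=Q_b(-\alpha\nabla\phi\cdot\bn_e)$, which reproduces $\lambda_h|_D\,|D|$ on each control volume and yields $\sum_D(\lambda_h|_D)^2|D|=0$ directly, cell by cell. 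You instead unfold $\nabla_w\cdot p$ into edge contributions and exploit that $V_h$ is edgewise discontinuous with $P_0(e)\subset P_{k-1}(e)$, choosing $p$ edgewise constant equal to the (signed) jump of $\lambda_h$ on interior edges and to $\pm\lambda_h|_{D_e}$ on boundary edges; this forces all jumps and the boundary-cell values to vanish. Both arguments are valid. Yours is more elementary and avoids a regularity point the paper glosses over: with $\alpha$ only in $L^\infty$, the normal flux $-\alpha\nabla\phi\cdot\bn$ of the auxiliary solution is a priori only in $H^{-1/2}(\partial D)$, so applying the edgewise $L^2$ projection $Q_b$ to it requires some justification, whereas your test functions are explicitly constructed in $V_h$. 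The price is that your conclusion $\lambda_h\equiv 0$ uses the (standard, but worth stating) fact that the edge-adjacency graph of the control volumes of a regular partition of the connected domain $\Omega$ is connected, so that vanishing jumps plus vanishing values on boundary-touching cells propagate to all of $\D_h$; the paper's duality argument needs no such connectivity and determines $\lambda_h|_D$ on every cell at once.
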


\begin{proof}
It suffices to show that the homogeneous problem has only trivial solution. To this end, let $(u_h;q_h;\lambda_h)\in S_h \times V_h\times W_h $ be a solution of the scheme (\ref{min-LagrangeForm2-1})-(\ref{min-LagrangeForm2-2}) with homogeneous data $f=0$.
By letting $v = u_h,\; p=q_h,\;w=\lambda_h$ in (\ref{min-LagrangeForm2-1})-(\ref{min-LagrangeForm2-2}), we have
\begin{equation}
(\alpha \nabla u_h, \nabla u_h) +
\sum_{D\in\D_h} \sum_{e\subset\pD} h_D^\beta \int_e|q_h+\alpha\nabla u_h\cdot\bm{n}_e|^2 ds=0,
\end{equation}
which leads to $u_h \equiv 0$ and $q_h\equiv 0$.

Next, from the equation (\ref{min-LagrangeForm2-1}) we have
\begin{equation}\label{EQ:April:002}
0=(\nabla_w\cdot p, \lambda_h) = \sum_{D\in\D_h}\lambda_h|_D\int_{\partial D} p \bn \cdot \bn_e ds, \quad \forall p\in V_h.
\end{equation}
Consider the following auxiliary problem: Find $\phi\in H_0^1(\Omega)$ such that
\begin{eqnarray*}
-\nabla\cdot(\alpha\nabla \phi )&=&\lambda_h,\qquad {\rm in}\  \Omega  \label{ellipticbdy-2}\\
\phi &=&0,\qquad {\rm on}\ \partial\Omega. \label{ellipticbc-2}
\end{eqnarray*}
Let $Q_b$ be the $L^2$ projection operator onto $V_h$, and set $p|_e=Q_b(- \alpha \nabla \phi \cdot \bn_e)$ on each edge $e\in \E_h$. It follows that
\begin{eqnarray*}
\int_{\pD} p \bn \cdot \bn_e ds &=& \int_{\pD} Q_b (-\alpha\nabla \phi \cdot \bn) ds \\
& = & \int_{\pD}-\alpha\nabla \phi \cdot \bn ds \\
& = & \int_{D} \lambda_h dx= \lambda_h|_D \cdot |D|.
\end{eqnarray*}
Substituting the above into (\ref{EQ:April:002}) yields
{\color{black}
\begin{equation}
\sum_{D\in\D_h}(\lambda_h|_D)^2 |D|=0,
\end{equation} }
which gives $\lambda_h\equiv 0$.
\end{proof}

\section{Error Estimates}\label{sectionEE}
The goal of this section is to establish some error estimate for the approximate solution $(u_h;q_h)$ arising from the scheme (\ref{min-LagrangeForm2-1})-(\ref{min-LagrangeForm2-2}). To this end,
denote by
\begin{equation}\label{equation.error}
e_u:= u_h -R_h u,\; e_q:=q_h -Q_b q,\; e_\lambda:=\lambda_h-\lambda,
\end{equation}
the error functions for the numerical solution, where
$q=-\alpha\nabla u \cdot \bm{n}_e$ is the flux on edge $e\in \E_h$, $Q_b q$ is the $L^2$ projection of the flux in $V_h$, and
$R_hu$ is the classical Ritz-Galerkin finite element approximation of $u$ given by (\ref{EQ:Ritz-Galerkin-1}). The convergence behavior for $R_hu$ has been extensively studied in existing literature in various Sobolev norms, and they are considered as standard and known in this paper.

\begin{theorem}\label{thm:Super}
For the model problem (\ref{EQ:Elliptic}), assume that the coefficient $\alpha$ is in $L^\infty(\D_h)$ and uniformly positive definite, $u\in H^{k+1}(\Omega)$. Let $(u_h;q_h)\in S_h\times V_h$ be the approximate solution arising from the numerical scheme (\ref{min-LagrangeForm2-1})-(\ref{min-LagrangeForm2-2}) with any real $\beta$, then the following error estimates hold true
\begin{eqnarray}\label{eq.primal}
\|u_h-R_h u\|_1\lesssim h^{k+\frac{\beta-1}{2}}\|u\|_{k+1},
\end{eqnarray}
\begin{equation}\label{error.Flux}
\left(\sum_D h_D \int_\pD| q_h- Q_bq|^2 ds\right)^{\frac12}
\lesssim h^{k}(1+ h^{(\beta-1)/2})\| u\|_{k+1}.
\end{equation}
In other words, for $\beta \ge 1$, the flux approximation is of optimal order and the numerical approximation for the primal variable is of super-closeness to the standard Galerkin finite element solution $R_h u$ in the usual $H^1$-norm.
\end{theorem}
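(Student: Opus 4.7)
The plan is to set up an error equation by testing the scheme against a ``reference'' discrete solution $(R_h u, Q_b q, 0)$, then perform a single energy estimate with the elegant test function choice $(v,p,w) = (e_u, e_q, \lambda_h)$. First I would observe that $R_h u$ satisfies the Ritz--Galerkin identity $(\alpha\nabla R_h u, \nabla v) = (f,v)$ for $v \in S_h$, and that $Q_b q$ is locally conservative in the sense $(\nabla_w \cdot Q_b q, w) = (f,w)$ (since $Q_b$ preserves the edge integral against constants, and $\int_{\partial D} q\,\bn\cdot\bn_e\,ds = \int_D f\,dx$ by the divergence theorem). Subtracting $s_h((R_h u; Q_b q),(v;p))$ from (\ref{min-LagrangeForm2-1}) and using these two identities, the error system collapses to
\begin{equation*}
s_h((e_u;e_q),(v;p)) + (\nabla_w\cdot p,\lambda_h) = -\sum_{D\in\D_h}\sum_{e\subset\pD}h_D^\beta\langle \eta_e, p+\alpha\nabla v\cdot\bn_e\rangle_e,\quad (\nabla_w\cdot e_q,w)=0,
\end{equation*}
where $\eta_e := Q_b q + \alpha\nabla R_h u\cdot\bn_e$ is the consistency residual.

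Next I would take $v=e_u$, $p=e_q$, and $w=\lambda_h$. The second equation gives $(\nabla_w\cdot e_q, \lambda_h)=0$, which precisely cancels the Lagrange multiplier term that appears on the left of the first equation. Using the coercivity of $(\alpha\nabla\cdot,\nabla\cdot)$, Cauchy--Schwarz, and Young's inequality to absorb $\tfrac12\sum h_D^\beta\|e_q+\alpha\nabla e_u\cdot\bn_e\|_{L^2(e)}^2$ into the left-hand side, I obtain the master bound
\begin{equation*}
\|e_u\|_1^2 + \sum_{D,e\subset\pD} h_D^\beta\|e_q+\alpha\nabla e_u\cdot\bn_e\|_{L^2(e)}^2 \;\lesssim\; \sum_{D,e\subset\pD} h_D^\beta\|\eta_e\|_{L^2(e)}^2.
\end{equation*}

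The residual $\eta_e$ splits as $\eta_e = (Q_b q - q) + \alpha\nabla(R_h u - u)\cdot\bn_e$ using $q = -\alpha\nabla u\cdot\bn_e$. For the first piece, the standard $L^2$-projection estimate on an edge gives $\|Q_b q - q\|_{L^2(e)}^2 \lesssim h^{2k-1}\|u\|_{k+1,T}^2$. For the second, I would combine the trace inequality $\|w\|_{L^2(e)}^2 \lesssim h^{-1}\|w\|_{L^2(T)}^2 + h\|\nabla w\|_{L^2(T)}^2$ with the Ritz--Galerkin estimates $\|R_h u - u\|_m \lesssim h^{k+1-m}\|u\|_{k+1}$ for $m=1,2$ to obtain $\|\nabla(R_h u - u)\|_{L^2(e)}^2 \lesssim h^{2k-1}\|u\|_{k+1,T}^2$. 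Summing over triangles yields $\sum h_D^\beta\|\eta_e\|_{L^2(e)}^2 \lesssim h^{2k+\beta-1}\|u\|_{k+1}^2$, which upon taking square roots gives the claim (\ref{eq.primal}).

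For the flux estimate (\ref{error.Flux}) I would apply $|e_q|^2 \le 2|e_q + \alpha\nabla e_u\cdot\bn_e|^2 + 2|\alpha\nabla e_u\cdot\bn_e|^2$, weight by $h_D$, and sum. The first part contributes $h^{1-\beta}\cdot h^{2k+\beta-1} = h^{2k}$. For the second, the discrete trace inequality $\|\nabla e_u\|_{L^2(e)}^2 \lesssim h^{-1}\|\nabla e_u\|_{L^2(T)}^2$ (valid since $e_u|_T$ is polynomial) plus the $H^1$ bound just derived give $\sum h_D\|\alpha\nabla e_u\cdot\bn_e\|_{L^2(e)}^2 \lesssim \|e_u\|_1^2 \lesssim h^{2k+\beta-1}\|u\|_{k+1}^2$. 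Combining the two contributions yields the stated factor $h^k(1+h^{(\beta-1)/2})$. The main technical obstacle is the $h^{2k-1}$ trace/approximation bound for $\|\nabla(R_h u - u)\|_{L^2(e)}^2$, since it uses both the optimal $H^1$ and $L^2$ Ritz--Galerkin estimates in a slightly non-standard combination; everything else is routine once the Lagrange multiplier has been eliminated via the test function choice above.
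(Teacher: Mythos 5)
Your proposal is correct and follows essentially the same route as the paper's proof: the error equation against $(R_h u; Q_b q; 0)$ with the consistency residual $\eta_e = Q_b q + \alpha\nabla R_h u\cdot\bn_e$, elimination of the Lagrange multiplier via $(\nabla_w\cdot e_q,\lambda_h)=0$, the Cauchy--Schwarz/absorption energy bound, the trace-plus-Ritz--Galerkin estimate giving $\sum h_D^\beta\|\eta_e\|_e^2\lesssim h^{2k+\beta-1}\|u\|_{k+1}^2$, and the triangle-inequality-plus-trace argument for the flux bound. No gaps beyond what the paper itself leaves implicit (e.g.\ the elementwise $H^2$ estimate for $u-R_hu$).
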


\begin{proof}
 Note that $\lambda = 0$ in \eqref{equation.error}. It follows from (\ref{min-LagrangeForm2-1}) and \eqref{EQ:Ritz-Galerkin-1} that
\begin{equation} \label{EQ:April:001}
\begin{split}
&s_h((e_u;e_q), (v;p)) + (\nabla_w \cdot p, e_\lambda) \\
=& s_h((u_u;q_h), (v;p)) + (\nabla_w \cdot p, \lambda_h) - s_h((R_h u; Q_b q), (v;p)) \\
= & (f, v) - s_h((R_h u; Q_b q), (v;p)) \\
=&(f,v) - \sum_D h_D^\beta \sum_{e\in\pD}\langle Q_b q + \alpha \nabla R_h u \cdot \bm{n}_e, p+ \alpha \nabla v \cdot \bm{n}_e \rangle_{e}-(\alpha\nabla R_h u,\nabla v) \\
=& -\sum_D h_D^\beta \sum_{e\in\pD}\langle Q_b q + \alpha\nabla R_h u \cdot \bm{n}_e, p+\alpha\nabla v \cdot \bm{n}_e\rangle_e
\end{split}
\end{equation}
for all $p\in V_h$ and $v\in S_h$.
Next, from (\ref{min-LagrangeForm2-2}) and the definition of the $L^2$ projection operator $Q_b$ we have
\begin{equation*}
\begin{split}
( \nabla_w \cdot  e_q, w ) =&( \nabla_w \cdot q_h, w ) - (\nabla_w \cdot Q_b q, w) \\
=& (f,w)-(\nabla_w \cdot q, w)
= 0
\end{split}
\end{equation*}
for all $w\in W_h$. In particular, we have
\begin{equation}\label{EE-2}
\begin{split}
( \nabla_w \cdot  e_q, e_\lambda ) =&( \nabla_w \cdot e_q, \lambda_h ) - (\nabla_w \cdot e_q, \lambda) \\
=& ( \nabla_w \cdot e_q, \lambda_h ) = 0.
\end{split}
\end{equation}
Thus, by choosing $p= e_q$ and $v= e_h$ in equation \eqref{EQ:April:001} we arrive at
\begin{equation} \label{EQ:April:005}
s_h((e_u;e_q), (e_u;e_q))= -\sum_D h_D^\beta \sum_{e\in\pD}\langle Q_b q + \alpha\nabla R_h u \cdot \bm{n}_e, e_q+\alpha\nabla e_u \cdot \bm{n}_e\rangle_e,
\end{equation}
which, combined with
$$
s_h((v;p), (v,p))= (\alpha \nabla v, \nabla v) +
\sum_D h_D^\beta \sum_{e \in \pD}\int_e |p+\alpha \nabla v \cdot \bm{n}_e|^2 ds ,
$$
leads to the following
\begin{equation} \label{EQ:April:006}
\begin{split}
& (\alpha \nabla e_u, \nabla e_u) + \sum_D h_D^\beta \sum_{e \in \pD}\int_e |e_q+\alpha \nabla e_u \cdot \bm{n}_e|^2 ds  \\
= &-\sum_D h_D^\beta \sum_{e\in\pD}\langle Q_b q + \alpha\nabla R_h u \cdot \bm{n}_e, e_q+\alpha\nabla e_u \cdot \bm{n}_e\rangle_e.
\end{split}
\end{equation}

Thus, by using the Cauchy-Schwarz inequality we have
\begin{equation}\label{EQ:April:008}
\begin{split}
& (\alpha \nabla  e_u,\nabla  e_u) + \sum_D h_D^\beta \sum_{e \in \pD} \int_e  |e_q + \alpha \nabla  e_u\cdot \bm{n}_e|^2 ds \\
\le & \ \sum_D h^\beta_D \sum_{e \in \pD} \| Q_b q + \alpha\nabla R_h u \cdot \bm{n}_e \|_e^2 \\
\lesssim & \ \sum_D h^\beta_D\sum_{e \in \pD} \left(\|Q_b q -q\|^2_e +\|\alpha \nabla (u-  R_h u)\cdot \bm{n}_e\|^2_e \right) \\
\lesssim & \ h^\beta \sum_D(h^{-1}\|\nabla (u- R_h u)\|^2_D + h\|\nabla^2 (u- R_h u)\|^2_D)\\
\lesssim & \ h^{\beta -1} h^{2k} \|u\|_{k+1}^2,
\end{split}
\end{equation}
which gives
\begin{eqnarray}\label{EE-q}
\sum_D h_D \sum_{e \in \pD}\| e_q + \alpha\nabla  e_u \cdot \bm{n}\|^2_{e} \lesssim  h^{2k} \|u\|_{k+1}^2
\end{eqnarray}
and
\begin{eqnarray}\label{EE-q2}
\|\nabla e_u \|^2_0 \lesssim  h^{\beta-1}h^{2k}\| u\|_{k+1}^2.
\end{eqnarray}
The last inequality confirms the error estimate \eqref{eq.primal}.

As to the flux error estimate \eqref{error.Flux}, we use the equation \eqref{EE-q} and \eqref{EE-q2} to obtain
\begin{equation}\label{eq.Flux}
\begin{split}
\sum_D h_D \int_\pD| q_h- Q_bq|^2 ds = & \sum_D h_D \int_\pD| (e_q + \alpha \nabla e_u\cdot\bn_e) - \alpha \nabla e_u \cdot\bn_e|^2 ds\\
\lesssim & \sum_D h_D \int_\pD \left(|e_q + \alpha \nabla e_u\cdot\bn_e|^2 + |\alpha \nabla e_u \cdot\bn_e|^2\right) ds\\
\lesssim & (\alpha \nabla e_u, \nabla e_u) + \sum_D h_D \int_\pD |e_q + \alpha \nabla e_u\cdot\bn_e|^2 ds\\
\lesssim & h^{2k}(1+ h^{\beta-1})\| u\|_{k+1}^2,
\end{split}
\end{equation}
which leads to \eqref{error.Flux} by taking the square-root of both sides. This completes the proof of the theorem.
\end{proof}

\medskip

An $L^2$ error estimate can be obtained by using the standard Poincar\'e inequality as follows:
\begin{equation}\label{Error:L2}
\begin{split}
\|e_u\|_0 \leq& C\|\nabla e_u\|_0\\
          \leq& C h^{k +\frac{\beta-1}{2}}\|u\|_{k+1}.
          \end{split}
\end{equation}
Thus, we have the optimal order error estimate in $L^2$ when $\beta \geq 3$. This result will be further confirmed or enhanced through numerical experiments to be presented in the following section.

\section{Numerical Experiments}\label{numerical-experiments}
In this section, we shall numerically verify the theoretical error estimates developed in the previous section. Our test problems are defined in two-dimensional square domains which seeks $u\in H^1(\Omega)$ such that
\begin{equation}\label{EQ:EllipticTest}
\left \{\begin{split}
-\nabla\cdot(\alpha\nabla u)&=f,\qquad {\rm in}\  \Omega\\
u&=g, \qquad {\rm on}\ \partial\Omega.
\end{split}\right.
\end{equation}
For each of the test cases, the CFO-FEM scheme \eqref{min-LagrangeForm2-1}-\eqref{min-LagrangeForm2-2} is implemented with $k=1,2,3$ and $r=2$. For simplicity, the control volumes are chosen as the finite element triangular partition $\T_h$ in all the numerical tests. The following metrics are used to measure the magnitude of the error:
\begin{eqnarray*}
\text{$L^2$-norm: }
&&\|u_h-u \|_{0}=\left(\sum_{{T}\in\T_h }\int_{{T}}|u_h-u|^2 d{T}\right)^{1/2},\\
\text{$H^1$-norm: }
&&\|u_h-u\|_1=\|\nabla(u_h-u) \|_{0},\\
\text{Flux $L^2$-norm: }
&& \|q_h -q \|=\left(\sum_{{T}\in\T_h }h_T\int_{\partial {T}}|\alpha \nabla u \cdot\bn_{e}+q_h|^2 d s\right)^{1/2}.
\end{eqnarray*}
The error between the classical Ritz-Galerkin finite element solution and the CFO-FEM solution will also be computed in the usual $L^2$ and $H^1$ norms; i.e., $\|u_h-R_hu\|_{0}$ and $\|u_h-R_hu \|_{1}$.

\subsection{Test Case 1: Smooth Coefficients} In this experiment, the elliptic problem \eqref{EQ:EllipticTest} has exact solution $u=\cos(\pi x)\cos(\pi y)$ with domain $\Omega=(0,1)^2$. The coefficient $\alpha$ is the identity matrix. The right-hand side function $f$ and the Dirichlet boundary data $g$ are chosen to match the exact solution.  The meshes are obtained by first uniformly partitioning the square domain $\Omega$ into $N^2,\ N=\frac{1}{h}$, small squares and then decomposing each small square into $2$ similar triangles. Table \ref{table11-h12} illustrates the performance of the CFO-FEM scheme with $k=1, 2, 3$, $\beta=1,2$ and $r=2$ on the uniform partitions. The results clearly show that the errors $\|u_h-u \|_{1}$ and $\|q_h-q \|_{0}$ converge to zero at optimal order of $h^k$ $(k=1,2,3)$ when $\beta\ge 1$. In addition, $\|u_h-u \|_{0}$ converges to zero at the order of $h^k$ or better. The $L^2$ convergence can be improved to be of optimal order with large values of $\beta$. For example, for the second order CFO-FEM (i.e, $k=2$), the numerical results indicate an optimal order of convergence with $\beta=2$ and $3$, while for the first and third order scheme, the numerical results suggest optimal order of convergence with $\beta=1$. For the numerical Lagrange multiplier $\lambda_h$, it converges to zero with rate $h^{k+\beta}$ for the first and third order scheme, while rate $h^{k+\beta-1}$ is observed for the second order scheme. The numerical results (see Tab. \ref{table11-FE-beta1}) also confirm the super-closeness estimate between the Ritz-Galerkin finite element solution and the CFO-FEM solution. The numerical results are in great consistency with the theory.

We note that, for clarity, only some representative results are presented in this section. The readers may refer to the appendix for a collection of additional results. For example, the the numerical results for the CFO-FEM scheme with $k=1, 2, 3$, $\beta=-1,0$ are included. The numerical results converge with certain orders, although we haven't theory estimations for these cases. Moreover, we observe that the errors $\|u_h-u \|_{1}$ and $\|q_h-q \|_{0}$ converge to zero at nearly optimal orders of $h^k$ $(k=1,2)$ when $\beta=0, 1$.

It should be emphasized that the main advantage of the CFO-FEM algorithm (\ref{min-LagrangeForm2-1}-\ref{min-LagrangeForm2-2}) is that one obtains not only a discrete solution $u_h$ with optimal order of convergence, but also an element-wise conserving flux $q_h$ with optimal order of convergence. Moreover, the CFO-FEM algorithm makes use of less number of degrees of freedom than some other existing numerical methods such as the mixed finite element method.

\begin{table}[!h]
\scriptsize
\begin{center}
\caption{Error and convergence performance of the CFO scheme for Test Case 1 on uniform meshes.}\label{table11-h12}
\begin{tabular}{|c|cc|cc|cc|cc|}
\hline
\multicolumn{9}{|>{\columncolor{yellow!20}}c|}{ First order CFO with $\beta =1$ }\\
\hline
h & $\|u_h-u \|_{0}$ & rate & $\|u_h-u\|_{1}$ & rate &$ \|q_h-q \|_{0}$& rate  & $\|\lambda_h\|_{0}$ & rate  \\
\hline
1/8   &   7.44e-02 &  -     &   1.02e-01  & \color{blue}{-   } &    8.46e-02 &  \color{blue}{-}    &    1.69e-01 &  -       \\
1/16  &   1.94e-02 &  1.94  &   4.96e-02  & \color{blue}{1.04} &    4.07e-02 &  \color{blue}{1.06 }&    4.39e-02 &  1.94    \\
1/32  &   4.91e-03 &  1.98  &   2.46e-02  & \color{blue}{1.01} &    2.01e-02 &  \color{blue}{1.02 }&    1.11e-02 &  1.98    \\
1/64  &   1.23e-03 &  2.00  &   1.23e-02  & \color{blue}{1.00} &    1.00e-02 &  \color{blue}{1.00 }&    2.78e-03 &  2.00    \\
1/128 &   3.08e-04 &  2.00  &   6.14e-03  & \color{blue}{1.00} &    5.01e-03 &  \color{blue}{1.00 }&    6.95e-04 &  2.00    \\
\hline
\multicolumn{9}{|>{\columncolor{yellow!20}}c|}{First order CFO with $\beta =2$ }\\
\hline
h & $\|u_h-u \|_{0}$ & rate & $\|u_h-u\|_{1}$ & rate &$ \|q_h-q \|_{0}$& rate  & $\|\lambda_h\|_{0}$ & rate   \\
\hline
1/8   &    5.79e-02 &  -     &   1.00e-01 & \color{blue}{ -    } &   8.27e-02  &\color{blue}{ -    } &   2.45e-02  & -         \\
1/16  &    1.32e-02 &  2.13  &   4.92e-02 & \color{blue}{ 1.02 } &   4.03e-02  &\color{blue}{ 1.04 } &   2.87e-03  & 3.09      \\
1/32  &    3.02e-03 &  2.13  &   2.46e-02 & \color{blue}{ 1.00 } &   2.01e-02  &\color{blue}{ 1.01 } &   3.36e-04  & 3.10      \\
1/64  &    7.07e-04 &  2.09  &   1.23e-02 & \color{blue}{ 1.00 } &   1.00e-02  &\color{blue}{ 1.00 } &   4.01e-05  & 3.07      \\
1/128 &    1.70e-04 &  2.06  &   6.14e-03 & \color{blue}{ 1.00 } &   5.01e-03  &\color{blue}{ 1.00 } &   4.88e-06  & 3.04      \\
\hline
\hline
\multicolumn{9}{|>{\columncolor{mypink}}c|}{Second order CFO with $\beta =1$ }\\
\hline
h & $\|u_h-u \|_{0}$ & rate & $\|u_h-u\|_{1}$ & rate &$ \|q_h-q \|_{0}$& rate  & $\|\lambda_h\|_{0}$ & rate  \\
\hline
1/8   &    1.70e-02 &  -     &   1.15e-02  & \color{blue}{-   }  &   7.12e-02 & \color{blue}{  -   }  &   7.66e-03 &  -       \\
1/16  &    4.24e-03 &  2.00  &   2.88e-03  & \color{blue}{2.00}  &   1.73e-02 & \color{blue}{  2.04}  &   2.07e-03 &  1.89    \\
1/32  &    1.06e-03 &  2.00  &   7.19e-04  & \color{blue}{2.00}  &   4.27e-03 & \color{blue}{  2.02}  &   5.26e-04 &  1.97    \\
1/64  &    2.65e-04 &  2.00  &   1.80e-04  & \color{blue}{2.00}  &   1.06e-03 & \color{blue}{  2.01}  &   1.32e-04 &  1.99    \\
1/128 &    6.62e-05 &  2.00  &   4.50e-05  & \color{blue}{2.00}  &   2.64e-04 & \color{blue}{  2.00}  &   3.31e-05 &  2.00    \\
\hline
\multicolumn{9}{|>{\columncolor{mypink}}c|}{Second order CFO with $\beta =2$ }\\
\hline
h & $\|u_h-u \|_{0}$ & rate & $\|u_h-u\|_{1}$ & rate &$ \|q_h-q \|_{0}$& rate & $\|\lambda_h\|_{0}$ & rate   \\
\hline
1/8   &    6.13e-03  & -     &   8.10e-03 & \color{blue}{ -   }  &   5.50e-02  &\color{blue}{  -   }  &   2.80e-03 &  -    \\
1/16  &    8.71e-04  & 2.82  &   1.94e-03 & \color{blue}{ 2.06}  &   1.25e-02  &\color{blue}{  2.14}  &   4.20e-04 &  2.73 \\
1/32  &    1.17e-04  & 2.90  &   4.78e-04 & \color{blue}{ 2.02}  &   2.93e-03  &\color{blue}{  2.09}  &   5.73e-05 &  2.87 \\
1/64  &    1.51e-05  & 2.95  &   1.19e-04 & \color{blue}{ 2.01}  &   7.06e-04  &\color{blue}{  2.05}  &   7.49e-06 &  2.94 \\
1/128 &    1.93e-06  & 2.97  &   2.97e-05 & \color{blue}{ 2.00}  &   1.73e-04  &\color{blue}{  2.03}  &   9.57e-07 &  2.97 \\
\hline
\hline
\multicolumn{9}{|>{\columncolor{blue!20}}c|}{Third order CFO with $\beta =1$ }\\
\hline
h & $\|u_h-u \|_{0}$ & rate & $\|u_h-u\|_{1}$ & rate &$ \|q_h-q \|_{0}$& rate  & $\|\lambda_h\|_{0}$ & rate\\
\hline
1/8    &    2.31e-04 &   -    &   4.43e-04  &\color{blue}{  -  }  &   3.12e-03  &\color{blue}{  -  }  &   1.03e-04  &  -      \\
1/16   &    1.49e-05 &  3.96  &   5.45e-05  &\color{blue}{ 3.02}  &   3.43e-04  &\color{blue}{ 3.18}  &   6.78e-06  & 3.93    \\
1/32   &    9.40e-07 &  3.98  &   6.76e-06  &\color{blue}{ 3.01}  &   4.01e-05  &\color{blue}{ 3.10}  &   4.31e-07  & 3.98    \\
1/64   &    5.90e-08 &  3.99  &   8.42e-07  &\color{blue}{ 3.00}  &   4.86e-06  &\color{blue}{ 3.05}  &   2.71e-08  & 3.99    \\
1/128  &    3.70e-09 &  4.00  &   1.05e-07  &\color{blue}{ 3.00}  &   5.99e-07  &\color{blue}{ 3.02}  &   1.70e-09  & 4.00    \\
\hline
\multicolumn{9}{|>{\columncolor{blue!20}}c|}{Third order CFO with $\beta =2$ }\\
\hline
h & $\|u_h-u \|_{0}$ & rate & $\|u_h-u\|_{1}$ & rate &$ \|q_h-q \|_{0}$& rate  & $\|\lambda_h\|_{0}$ & rate  \\
\hline
1/8   &    9.64e-05 &   -    &   3.11e-04 & \color{blue}{   -  } &    3.15e-03  &\color{blue}{   -  }  &   3.04e-05  &  -    \\
1/16  &    4.67e-06 &  4.37  &   3.44e-05 & \color{blue}{  3.17} &    3.87e-04  &\color{blue}{  3.02}  &   1.14e-06  & 4.74  \\
1/32  &    2.43e-07 &  4.26  &   4.01e-06 & \color{blue}{  3.10} &    4.94e-05  &\color{blue}{  2.97}  &   3.91e-08  & 4.87  \\
1/64  &    1.37e-08 &  4.15  &   4.87e-07 & \color{blue}{  3.04} &    6.33e-06  &\color{blue}{  2.97}  &   1.28e-09  & 4.94  \\
1/128 &    8.17e-10 &  4.07  &   6.03e-08 & \color{blue}{  3.01} &    8.04e-07  &\color{blue}{  2.98}  &   4.06e-11  & 4.97  \\
\hline
\end{tabular}
\end{center}
\end{table}

\begin{table}[!h]
\scriptsize
\begin{center}
\caption{Errors between the CFO scheme and the Ritz Galerkin finite element method for Test Case 1.}\label{table11-FE-beta1}
\begin{tabular}{||c|cc|cc||}
\hline
\multicolumn{5}{|>{\columncolor{yellow!20}}c|}{ First order CFO with $\beta =1$ }\\
\hline
h &  $\|u_h-R_hu \|_{0}$ & rate & $\|u_h- R_hu\|_{1}$ & rate  \\
\hline
 1/8   &       3.45e-02  & \color{blue}{-   }  &   1.89e-02 &\color{blue}{  -   } \\
 1/16  &       9.24e-03  & \color{blue}{1.90}  &   4.91e-03 &\color{blue}{  1.94} \\
 1/32  &       2.35e-03  & \color{blue}{1.97}  &   1.24e-03 &\color{blue}{  1.98} \\
 1/64  &       5.91e-04  & \color{blue}{1.99}  &   3.11e-04 &\color{blue}{  2.00} \\
 1/128 &       1.48e-04  & \color{blue}{2.00}  &   7.79e-05 &\color{blue}{  2.00} \\
\hline
\multicolumn{5}{|>{\columncolor{yellow!20}}c|}{ First order CFO with $\beta =2$ }\\
\hline
h  & $\|u_h-R_hu \|_{0}$ & rate & $\|u_h- R_hu\|_{1}$ & rate  \\
\hline
 1/8   &    1.78e-02 &\color{blue}{  -   } &    9.61e-03 &\color{blue}{  -   }\\
 1/16  &    2.95e-03 &\color{blue}{  2.60} &    1.54e-03 &\color{blue}{  2.64}\\
 1/32  &    4.28e-04 &\color{blue}{  2.79} &    2.21e-04 &\color{blue}{  2.80}\\
 1/64  &    5.79e-05 &\color{blue}{  2.89} &    2.98e-05 &\color{blue}{  2.89}\\
 1/128 &    7.54e-06 &\color{blue}{  2.94} &    3.88e-06 &\color{blue}{  2.94}\\
\hline
\multicolumn{5}{|>{\columncolor{yellow!20}}c|}{ First order CFO with $\beta =3$ }\\
\hline
h  & $\|u_h-R_hu \|_{0}$ & rate & $\|u_h- R_hu\|_{1}$ & rate  \\
\hline
 1/8   &         4.79e-03 &\color{blue}{  -    } &   2.55e-03 &\color{blue}{  -   } \\
 1/16  &         3.41e-04 &\color{blue}{  3.81 } &   1.76e-04 &\color{blue}{  3.85} \\
 1/32  &         2.20e-05 &\color{blue}{  3.95 } &   1.13e-05 &\color{blue}{  3.96} \\
 1/64  &         1.39e-06 &\color{blue}{  3.99}  &   7.12e-07 &\color{blue}{  3.99} \\
 1/128 &         8.69e-08 &\color{blue}{  4.00}  &   4.46e-08 &\color{blue}{  4.00} \\
\hline
\multicolumn{5}{|>{\columncolor{mypink}}c|}{Second order CFO with $\beta =1$ }\\
\hline
h &   $\|u_h-R_hu \|_{0}$ & rate & $\|u_h- R_hu\|_{1}$ & rate  \\
\hline
1/8   &   1.67e-02 &\color{blue}{  -   } &     8.63e-03  & \color{blue}{-   }\\
1/16  &   4.22e-03 &\color{blue}{  1.98} &     2.16e-03  & \color{blue}{2.00}\\
1/32  &   1.06e-03 &\color{blue}{  2.00} &     5.40e-04  & \color{blue}{2.00}\\
1/64  &   2.65e-04 &\color{blue}{  2.00} &     1.35e-04  & \color{blue}{2.00}\\
1/128 &   6.62e-05 &\color{blue}{  2.00} &     3.38e-05  & \color{blue}{2.00}\\
\hline
\multicolumn{5}{|>{\columncolor{mypink}}c|}{Second order CFO with $\beta =2$ }\\
\hline
h &  $\|u_h-R_hu \|_{0}$ & rate & $\|u_h- R_hu\|_{1}$ & rate  \\
\hline
1/8   &    5.84e-03 &\color{blue}{  -   }  &    2.96e-03 &\color{blue}{  -   }\\
1/16  &    8.49e-04 &\color{blue}{  2.78}  &    4.28e-04 &\color{blue}{  2.79}\\
1/32  &    1.15e-04 &\color{blue}{  2.88}  &    5.79e-05 &\color{blue}{  2.89}\\
1/64  &    1.50e-05 &\color{blue}{  2.94}  &    7.54e-06 &\color{blue}{  2.94}\\
1/128 &    1.91e-06 &\color{blue}{  2.97}  &    9.62e-07 &\color{blue}{  2.97}\\
\hline
\multicolumn{5}{|>{\columncolor{mypink}}c|}{Second order CFO with $\beta =3$ }\\
\hline
h  & $\|u_h-R_hu \|_{0}$ & rate & $\|u_h- R_hu\|_{1}$ & rate  \\
\hline
1/8   &      9.51e-04  &\color{blue}{ -   }  &    4.81e-04 &\color{blue}{  -   }\\
1/16  &      6.18e-05  &\color{blue}{ 3.94}  &    3.11e-05 &\color{blue}{  3.95}\\
1/32  &      3.90e-06  &\color{blue}{ 3.99}  &    1.96e-06 &\color{blue}{  3.99}\\
1/64  &      2.44e-07  &\color{blue}{ 4.00}  &    1.23e-07 &\color{blue}{  4.00}\\
1/128 &      1.53e-08  &\color{blue}{ 4.00}  &    7.68e-09 &\color{blue}{  4.00}\\
\hline
\hline
\multicolumn{5}{|>{\columncolor{blue!20}}c|}{Third order CFO with $\beta =1$ }\\
\hline
h  & $\|u_h-R_hu \|_{0}$ & rate & $\|u_h- R_hu\|_{1}$ & rate  \\
\hline
1/8   &  2.15e-04  &\color{blue}{  -  }   &  4.57e-04 &\color{blue}{   -  }\\
1/16  &  1.39e-05  &\color{blue}{ 3.95}   &  5.52e-05 &\color{blue}{  3.00}\\
1/32  &  8.84e-07  &\color{blue}{ 3.98}   &  6.79e-06 &\color{blue}{  3.02}\\
1/64  &  5.55e-08  &\color{blue}{ 3.99}   &  8.45e-07 &\color{blue}{  3.01}\\
1/128 &  3.48e-09  &\color{blue}{ 4.00}   &  1.05e-07 &\color{blue}{  3.00}\\
\hline
\multicolumn{5}{|>{\columncolor{blue!20}}c|}{Third order CFO with $\beta =2$ }\\
\hline
h & $\|u_h-R_hu \|_{0}$ & rate & $\|u_h- R_hu\|_{1}$ & rate  \\
\hline
1/8   &       6.57e-05 &\color{blue}{   -  }  &   2.19e-04 &\color{blue}{   -  }\\
1/16  &       2.48e-06 &\color{blue}{  4.73}  &   1.75e-05 &\color{blue}{  3.65}\\
1/32  &       8.56e-08 &\color{blue}{  4.86}  &   1.28e-06 &\color{blue}{  3.78}\\
1/64  &       2.81e-09 &\color{blue}{  4.93}  &   8.71e-08 &\color{blue}{  3.87}\\
1/128 &       8.89e-11 &\color{blue}{  4.98}  &   5.71e-09 &\color{blue}{  3.93}\\
\hline
\multicolumn{5}{|>{\columncolor{blue!20}}c|}{Third order CFO with $\beta =3$ }\\
\hline
h & $\|u_h-R_hu \|_{0}$ & rate & $\|u_h- R_hu\|_{1}$ & rate  \\
\hline
1/8   &      1.14e-05 &\color{blue}{   -   } &   4.59e-05 &\color{blue}{   -  }\\
1/16  &      1.87e-07 &\color{blue}{  5.94 } &   1.52e-06 &\color{blue}{  4.91}\\
1/32  &      2.94e-09 &\color{blue}{ 5.99  } &   4.82e-08 &\color{blue}{  4.98}\\
1/64  &      4.61e-11 &\color{blue}{ 6.00  } &   1.51e-09 &\color{blue}{  5.00}\\
1/128 &      4.05e-12 &\color{blue}{  3.51 } &   4.72e-11 &\color{blue}{  5.00}\\
\hline
\end{tabular}
\end{center}
\end{table}

\subsection{Test Case 2: H\"older Continuous Coefficients} The coefficient matrix $\alpha$ in this test is given by
\begin{eqnarray}
\alpha=\begin{pmatrix}
        1+|x| & 0.5|x|^{\frac13}|y|^{\frac13} \\
        0.5|x|^{\frac13}|y|^{\frac13} & 1+|y|
        \end{pmatrix}
\end{eqnarray}
on the square domain $\Omega=(-1,1)^2$. The coefficient matrix is clearly non-smooth, but H\"older continuous. The right-hand side function and the Dirichlet boundary data are chosen to match the exact solution of $u(x,y)=\cos(\pi x)\cos(\pi y)$. Note that this example has been considered in \cite{Smears_SIAMJNA_2013,WangWang_2016}.

We use the CFO scheme \eqref{min-LagrangeForm2-1}-\eqref{min-LagrangeForm2-2} with $k=1,2,3 $ to approximate the above elliptic problem. The corresponding error and convergence information are reported in Tables \ref{table22-h12}, \ref{table22-FE-beta120-1} and \ref{table22-h-beta-12}. We observe that the convergence of the algorithm in $H^1$ norms has convergence order of $k <  1$. Moreover, for the flux on element edges, the convergence is also of optimal order of $k< 1$. In fact, the numerical results outperform the theory in $H^1$, as the coefficient $\alpha$ is non-smooth nor Lipschitz continuous on each element so that no convergence of order $k=1$ can be deduced from the theory.
Consider the domain $\Omega_2=(0.1,1)^2$, the coefficient matrix is smooth on this domain, the corresponding error and convergence results are reported in Tables \ref{table22-h12-part}, \ref{table22-FE-beta120-1-part} and \ref{table22-h-beta-12-part}.
We observe that the convergence of the algorithm in both the $L_2$ and the $H^1$ norms have optimal order of $k+1$ and $k$. Moreover, for the flux on element edges, the convergence is also of optimal order of $k$.
These numerical results support strongly the theoretical findings in the previous section.

\begin{table}[!h]
\scriptsize
\begin{center}
\caption{Error and convergence performance of the CFO scheme for Test Case 2 on domain $\Omega=(-1,1)^2$.}\label{table22-h12}
\begin{tabular}{|c|cc|cc|cc|cc|}
\hline
\multicolumn{9}{|>{\columncolor{yellow!20}}c|}{ First order CFO with $\beta =1$ }\\
\hline
h & $\|u_h-u \|_{0}$ & rate & $\|u_h-u\|_{1}$ & rate &$ \|q_h-q \|_{0}$& rate  & $\|\lambda_h\|_{0}$ & rate  \\
\hline
 1/8   &    6.03e-02  & -    &    9.90e-02  & \color{blue}{-   } &    9.39e-02 & \color{blue}{ -   } &    2.00e-01 &  -     \\
 1/16  &    1.50e-02  & 2.01 &    4.92e-02  & \color{blue}{1.01} &    4.69e-02 & \color{blue}{ 1.00} &    5.16e-02 &  1.95  \\
 1/32  &    3.63e-03  & 2.05 &    2.46e-02  & \color{blue}{1.00} &    2.35e-02 & \color{blue}{ 1.00} &    1.31e-02 &  1.97  \\
 1/64  &    8.62e-04  & 2.08 &    1.23e-02  & \color{blue}{1.00} &    1.18e-02 & \color{blue}{ 0.99} &    3.36e-03 &  1.97  \\
 1/128 &    2.01e-04  & 2.10 &    6.14e-03  & \color{blue}{1.00} &    5.95e-03 & \color{blue}{ 0.99} &    8.69e-04 &  1.95  \\
\hline
\multicolumn{9}{|>{\columncolor{yellow!20}}c|}{First order CFO with $\beta =2$ }\\
\hline
h & $\|u_h-u \|_{0}$ & rate & $\|u_h-u\|_{1}$ & rate &$ \|q_h-q \|_{0}$& rate  & $\|\lambda_h\|_{0}$ & rate   \\
\hline
 1/8   &    4.15e-02 &  -     &   9.82e-02 &  \color{blue}{-   }  &   9.33e-02 & \color{blue}{ -   } &    1.37e-02 &  -     \\
 1/16  &    9.82e-03 &  2.08  &   4.91e-02 &  \color{blue}{1.00}  &   4.68e-02 & \color{blue}{ 1.00} &    1.71e-03 &  3.00  \\
 1/32  &    2.37e-03 &  2.05  &   2.45e-02 &  \color{blue}{1.00}  &   2.35e-02 & \color{blue}{ 0.99} &    2.19e-04 &  2.96  \\
 1/64  &    5.83e-04 &  2.02  &   1.23e-02 &  \color{blue}{1.00}  &   1.18e-02 & \color{blue}{ 0.99} &    2.93e-05 &  2.90  \\
 1/128 &    1.45e-04 &  2.00  &   6.14e-03 &  \color{blue}{1.00}  &   5.95e-03 & \color{blue}{ 0.99} &    4.16e-06 &  2.82  \\
\hline
\hline
\multicolumn{9}{|>{\columncolor{mypink}}c|}{Second order CFO with $\beta =1$ }\\
\hline
h & $\|u_h-u \|_{0}$ & rate & $\|u_h-u\|_{1}$ & rate &$ \|q_h-q \|_{0}$& rate  & $\|\lambda_h\|_{0}$ & rate  \\
1/8   &    1.23e-02  & -      &  9.72e-03 & \color{blue}{ -   }  &   1.77e-01  & \color{blue}{ -   }  &   1.13e-02  & -      \\
1/16  &    2.85e-03  & 2.11   &  2.78e-03 & \color{blue}{ 1.81}  &   5.88e-02  & \color{blue}{ 1.59}  &   2.74e-03  & 2.04   \\
1/32  &    6.52e-04  & 2.13   &  1.08e-03 & \color{blue}{ 1.37}  &   2.71e-02  & \color{blue}{ 1.12}  &   6.33e-04  & 2.11   \\
1/64  &    1.62e-04  & 2.01   &  5.47e-04 & \color{blue}{ 0.98}  &   1.45e-02  & \color{blue}{ 0.89}  &   1.61e-04  & 1.97   \\
1/128 &    5.34e-05  & 1.60   &  3.03e-04 & \color{blue}{ 0.85}  &   8.12e-03  & \color{blue}{ 0.84}  &   5.54e-05  & 1.54   \\
\hline
\multicolumn{9}{|>{\columncolor{mypink}}c|}{Second order CFO with $\beta =2$ }\\
\hline
h & $\|u_h-u \|_{0}$ & rate & $\|u_h-u\|_{1}$ & rate &$ \|q_h-q \|_{0}$& rate & $\|\lambda_h\|_{0}$ & rate   \\
\hline
1/8   &    2.30e-03  & -      &  7.59e-03  &\color{blue}{ -   }  &   1.66e-01 & \color{blue}{  -   }  &   1.83e-03 &  -     \\
1/16  &    2.89e-04  & 2.99   &  1.90e-03  &\color{blue}{ 2.00}  &   5.61e-02 & \color{blue}{  1.56}  &   2.37e-04 &  2.95  \\
1/32  &    3.82e-05  & 2.92   &  4.75e-04  &\color{blue}{ 2.00}  &   2.61e-02 & \color{blue}{  1.11}  &   2.89e-05 &  3.03  \\
1/64  &    7.44e-06  & 2.36   &  1.21e-04  &\color{blue}{ 1.98}  &   1.40e-02 & \color{blue}{  0.89}  &   3.74e-06 &  2.95  \\
1/128 &    2.65e-06  & 1.49   &  3.36e-05  &\color{blue}{ 1.84}  &   7.82e-03 & \color{blue}{  0.84}  &   6.04e-07 &  2.63  \\
\hline
\hline
\multicolumn{9}{|>{\columncolor{blue!20}}c|}{Third order CFO with $\beta =1$ }\\
\hline
h & $\|u_h-u \|_{0}$ & rate & $\|u_h-u\|_{1}$ & rate &$ \|q_h-q \|_{0}$& rate  & $\|\lambda_h\|_{0}$ & rate\\
\hline
1/8   &    1.21e-03 &   -    &   2.58e-03 & \color{blue}{  -  }  &   8.06e-02 & \color{blue}{  -  }  &   1.44e-03  &  -   \\
1/16  &    4.68e-04 &  1.37  &   1.51e-03 & \color{blue}{ 0.77}  &   4.59e-02 & \color{blue}{ 0.81}  &   5.67e-04  & 1.35 \\
1/32  &    1.78e-04 &  1.40  &   8.63e-04 & \color{blue}{ 0.80}  &   2.59e-02 & \color{blue}{ 0.82}  &   2.18e-04  & 1.38 \\
1/64  &    6.85e-05 &  1.38  &   4.89e-04 & \color{blue}{ 0.82}  &   1.46e-02 & \color{blue}{ 0.83}  &   8.45e-05  & 1.37 \\
1/128 &    2.68e-05 &  1.36  &   2.76e-04 & \color{blue}{ 0.83}  &   8.21e-03 & \color{blue}{ 0.83}  &   3.31e-05  & 1.35 \\
\hline
\multicolumn{9}{|>{\columncolor{blue!20}}c|}{Third order CFO with $\beta =2$ }\\
\hline
h & $\|u_h-u \|_{0}$ & rate & $\|u_h-u\|_{1}$ & rate &$ \|q_h-q \|_{0}$& rate  & $\|\lambda_h\|_{0}$ & rate  \\
\hline
1/8   &    1.92e-04 &   -     &  4.26e-04  &\color{blue}{  -  }  &   7.61e-02 &\color{blue}{   -  }  &   4.29e-04 &   -   \\
1/16  &    2.58e-05 &  2.90   &  1.65e-04  &\color{blue}{ 1.37}  &   4.35e-02 &\color{blue}{  0.81}  &   1.03e-04 &  2.05 \\
1/32  &    1.93e-05 &  0.42   &  1.50e-04  &\color{blue}{ 0.13}  &   2.47e-02 &\color{blue}{  0.82}  &   2.28e-05 &  2.18 \\
1/64  &    1.14e-05 &  0.76   &  1.07e-04  &\color{blue}{ 0.49}  &   1.40e-02 &\color{blue}{  0.82}  &   4.77e-06 &  2.25 \\
1/128 &    5.35e-06 &  1.09   &  6.75e-05  &\color{blue}{ 0.67}  &   7.86e-03 &\color{blue}{  0.83}  &   9.75e-07 &  2.29 \\
\hline
\end{tabular}
\end{center}
\end{table}

\begin{table}[!h]
\scriptsize
\begin{center}
\caption{Error and convergence performance of the CFO scheme for Test Case 2 on domain $\Omega=(0.1,1)^2$.}\label{table22-h12-part}
\begin{tabular}{|c|cc|cc|cc|cc|}
\hline
\multicolumn{9}{|>{\columncolor{yellow!20}}c|}{ First order CFO with $\beta =1$ }\\
\hline
h & $\|u_h-u \|_{0}$ & rate & $\|u_h-u\|_{1}$ & rate &$ \|q_h-q \|_{0}$& rate  & $\|\lambda_h\|_{0}$ & rate  \\
\hline
 1/8   &    5.13e-02   &-        &1.04e-01  &\color{blue}{ -   }   &  1.05e-01  &\color{blue}{ -   }   &  1.60e-01 &  -     \\
 1/16  &    1.33e-02   &1.94     &5.22e-02  &\color{blue}{ 1.00}   &  5.22e-02  &\color{blue}{ 1.01}   &  4.15e-02 &  1.95  \\
 1/32  &    3.37e-03   &1.98     &2.61e-02  &\color{blue}{ 1.00}   &  2.60e-02  &\color{blue}{ 1.00}   &  1.05e-02 &  1.99  \\
 1/64  &    8.44e-04   &2.00     &1.29e-02  &\color{blue}{ 1.02}   &  1.28e-02  &\color{blue}{ 1.03}   &  2.65e-03 &  1.98  \\
 1/128 &    2.11e-04   &2.00     &6.42e-03  &\color{blue}{ 1.01}   &  6.33e-03  &\color{blue}{ 1.01}   &  6.65e-04 &  1.99  \\
\hline
\multicolumn{9}{|>{\columncolor{yellow!20}}c|}{First order CFO with $\beta =2$ }\\
\hline
h & $\|u_h-u \|_{0}$ & rate & $\|u_h-u\|_{1}$ & rate &$ \|q_h-q \|_{0}$& rate  & $\|\lambda_h\|_{0}$ & rate   \\
\hline
 1/8   &    4.60e-02  & -     &   1.04e-01 &\color{blue}{  -   }  &   1.05e-01 &\color{blue}{  -   }  &   2.94e-02 &  -    \\
 1/16  &    1.10e-02  & 2.06  &   5.21e-02 &\color{blue}{  1.00}  &   5.21e-02 &\color{blue}{  1.01}  &   3.57e-03 &  3.04 \\
 1/32  &    2.60e-03  & 2.09  &   2.61e-02 &\color{blue}{  1.00}  &   2.60e-02 &\color{blue}{  1.00}  &   4.24e-04 &  3.07 \\
 1/64  &    6.11e-04  & 2.09  &   1.29e-02 &\color{blue}{  1.02}  &   1.28e-02 &\color{blue}{  1.02}  &   4.98e-05 &  3.09 \\
 1/128 &    1.47e-04  & 2.05  &   6.42e-03 &\color{blue}{  1.01}  &   6.33e-03 &\color{blue}{  1.01}  &   6.01e-06 &  3.05 \\
\hline
\multicolumn{9}{|>{\columncolor{mypink}}c|}{Second order CFO with $\beta =1$ }\\
\hline
h & $\|u_h-u \|_{0}$ & rate & $\|u_h-u\|_{1}$ & rate &$ \|q_h-q \|_{0}$& rate  & $\|\lambda_h\|_{0}$ & rate  \\
\hline
1/8   &    1.34e-02 &  -    &    1.03e-02 &\color{blue}{  -   }  &   1.33e-01 & \color{blue}{  -   }  &   4.54e-03 &  -    \\
1/16  &    3.32e-03 &  2.01 &    2.57e-03 &\color{blue}{  2.00}  &   3.24e-02 & \color{blue}{  2.03}  &   1.22e-03 &  1.90 \\
1/32  &    8.28e-04 &  2.00 &    6.43e-04 &\color{blue}{  2.00}  &   8.02e-03 & \color{blue}{  2.02}  &   3.11e-04 &  1.97 \\
1/64  &    2.05e-04 &  2.01 &    1.61e-04 &\color{blue}{  2.00}  &   2.01e-03 & \color{blue}{  1.99}  &   8.06e-05 &  1.95 \\
1/128 &    5.10e-05 &  2.01 &    4.02e-05 &\color{blue}{  2.00}  &   5.04e-04 & \color{blue}{  2.00}  &   2.05e-05 &  1.98 \\
\hline
\multicolumn{9}{|>{\columncolor{mypink}}c|}{Second order CFO with $\beta =2$ }\\
\hline
h & $\|u_h-u \|_{0}$ & rate & $\|u_h-u\|_{1}$ & rate &$ \|q_h-q \|_{0}$& rate & $\|\lambda_h\|_{0}$ & rate   \\
\hline
1/8   &    6.63e-03 &  -      &  8.22e-03 &\color{blue}{  -   }  &   1.25e-01 & \color{blue}{  -   }   &  2.28e-03  & -     \\
1/16  &    1.03e-03 &  2.68   &  1.97e-03 &\color{blue}{  2.06}  &   3.00e-02 & \color{blue}{  2.05}   &  3.78e-04  & 2.59  \\
1/32  &    1.47e-04 &  2.81   &  4.81e-04 &\color{blue}{  2.03}  &   7.34e-03 & \color{blue}{  2.03}   &  5.49e-05  & 2.78  \\
1/64  &    1.90e-05 &  2.95   &  1.19e-04 &\color{blue}{  2.01}  &   1.84e-03 & \color{blue}{  2.00}   &  7.42e-06  & 2.89  \\
1/128 &    2.41e-06 &  2.98   &  2.97e-05 &\color{blue}{  2.00}  &   4.61e-04 & \color{blue}{  2.00}   &  9.63e-07  & 2.95  \\
\hline
\hline
\multicolumn{9}{|>{\columncolor{blue!20}}c|}{Third order CFO with $\beta =1$ }\\
\hline
h & $\|u_h-u \|_{0}$ & rate & $\|u_h-u\|_{1}$ & rate &$ \|q_h-q \|_{0}$& rate  & $\|\lambda_h\|_{0}$ & rate\\
\hline
1/8   &    2.92e-04 &   -    &   4.96e-04  &\color{blue}{  -  }  &   5.51e-03 & \color{blue}{  -  }  &   1.01e-04 &   -    \\
1/16  &    1.95e-05 &  3.91  &   6.14e-05  &\color{blue}{ 3.01}  &   6.22e-04 & \color{blue}{ 3.15}  &   6.79e-06 &  3.89  \\
1/32  &    1.25e-06 &  3.96  &   7.65e-06  &\color{blue}{ 3.01}  &   7.36e-05 & \color{blue}{ 3.08}  &   4.37e-07 &  3.96  \\
1/64  &    7.67e-08 &  4.02  &   9.47e-07  &\color{blue}{ 3.01}  &   9.18e-06 & \color{blue}{ 3.00}  &   2.80e-08 &  3.96  \\
1/128 &    4.75e-09 &  4.01  &   1.18e-07  &\color{blue}{ 3.01}  &   1.15e-06 & \color{blue}{ 3.00}  &   1.77e-09 &  3.98  \\
\hline
\multicolumn{9}{|>{\columncolor{blue!20}}c|}{Third order CFO with $\beta =2$ }\\
\hline
h & $\|u_h-u \|_{0}$ & rate & $\|u_h-u\|_{1}$ & rate &$ \|q_h-q \|_{0}$& rate  & $\|\lambda_h\|_{0}$ & rate  \\
\hline
1/8   &    1.32e-04 &   -    &   3.70e-04 &\color{blue}{   -  }  &   5.28e-03 & \color{blue}{  -   } &   3.34e-05 &   -   \\
1/16  &    6.46e-06 &  4.35  &   4.01e-05 &\color{blue}{  3.21}  &   6.52e-04 & \color{blue}{ 3.02 } &   1.36e-06 &  4.61 \\
1/32  &    3.24e-07 &  4.32  &   4.46e-06 &\color{blue}{  3.17}  &   8.38e-05 & \color{blue}{ 2.96 } &   4.99e-08 &  4.77 \\
1/64  &    1.69e-08 &  4.26  &   5.18e-07 &\color{blue}{  3.11}  &   1.10e-05 & \color{blue}{ 2.93 } &   1.67e-09 &  4.90 \\
1/128 &    9.60e-10 &  4.14  &   6.29e-08 &\color{blue}{  3.04}  &   1.42e-06 & \color{blue}{ 2.95 } &   5.38e-11 &  4.96 \\
\hline
\end{tabular}
\end{center}
\end{table}

\subsection{Test Case 3: Discontinuous Coefficients}
In this numerical test, the domain of the elliptic problem (\ref{EQ:EllipticTest}) is chosen as the unit square $\Omega=(0,1)^2$, and the coefficient $\alpha$ is given by
\begin{eqnarray}
\alpha=\left\{
       \begin{array}{ll}
       \begin{pmatrix}
        1 & 0 \\
        0 & 1
       \end{pmatrix}, \qquad \text{ if } x<0.5,\\[10pt]
       \begin{pmatrix}
        10 & 3 \\
        3  & 1
       \end{pmatrix}, \qquad \text{ if } x\geq 0.5,
       \end{array}\right.
\end{eqnarray}
which is clearly discontinuous along the vertical line of $x=\frac12$. With properly chosen data on the right-hand side function and the Dirichlet boundary value, the exact solution of (\ref{EQ:EllipticTest}) is given by
\begin{eqnarray}
u(x,y)=\left\{
       \begin{array}{ll}
          1-2y^2+4xy+6x+2y,\qquad  \text{ if } x<0.5,\\
          -2y^2+1.6xy-0.6x+3.2y+4.3, \qquad \text{ if } x\geq 0.5.
       \end{array}\right.
\end{eqnarray}

Tables \ref{table33-h12}, \ref{table33-h-beta-12} and \ref{table33-FE-beta120-1} illustrate the performance of the CFO scheme \eqref{min-LagrangeForm2-1}-\eqref{min-LagrangeForm2-2} when applied to the present test case. The results suggest an optimal order of convergence for the numerical approximation $u_h$ in the usual $H^1$ norm, which is in great consistency with the error estimate developed in the previous section. Likewise, the numerical approximation for the flux variable $q_h$ also has an optimal order of convergence, as predicted by the convergence theory. On the other hand, the convergence in $L^2$ for $u_h$ seems to be around $k=1.8$. Since the exact solution is polyoma of second order, for second order and third order CFO scheme, the errors are of machine precision.

\begin{table}[!h]
\scriptsize
\begin{center}
\caption{Error and convergence performance of the CFO scheme for Test Case 3.}\label{table33-h12}
\begin{tabular}{|c|cc|cc|cc|cc|}
\hline
\multicolumn{9}{|>{\columncolor{yellow!20}}c|}{ First order CFO with $\beta =1$ }\\
\hline
h & $\|u_h-u \|_{0}$ & rate & $\|u_h-u\|_{1}$ & rate &$ \|q_h-q \|_{0}$& rate  & $\|\lambda_h\|_{0}$ & rate  \\
\hline
1/8   &     6.24e-04  & -      &  1.64e-02  &\color{blue}{ -   }   &  2.92e-02  &\color{blue}{ -   }  &   1.04e-01  & -     \\
1/16  &     1.81e-04  & 1.78   &  8.17e-03  &\color{blue}{ 1.00}   &  1.29e-02  &\color{blue}{ 1.18}  &   2.70e-02  & 1.95  \\
1/32  &     5.13e-05  & 1.82   &  4.08e-03  &\color{blue}{ 1.00}   &  6.05e-03  &\color{blue}{ 1.09}  &   6.84e-03  & 1.98  \\
1/64  &     1.43e-05  & 1.85   &  2.04e-03  &\color{blue}{ 1.00}   &  2.95e-03  &\color{blue}{ 1.04}  &   1.72e-03  & 1.99  \\
1/128 &     3.91e-06  & 1.87   &  1.02e-03  &\color{blue}{ 1.00}   &  1.46e-03  &\color{blue}{ 1.01}  &   4.31e-04  & 2.00  \\
\hline
\multicolumn{9}{|>{\columncolor{yellow!20}}c|}{First order CFO with $\beta =2$ }\\
\hline
h & $\|u_h-u \|_{0}$ & rate & $\|u_h-u\|_{1}$ & rate &$ \|q_h-q \|_{0}$& rate  & $\|\lambda_h\|_{0}$ & rate   \\
\hline
1/8   &    5.62e-04  & -     &   1.63e-02  &\color{blue}{ -   }  &   2.84e-02 &\color{blue}{  -   }  &   1.74e-02 &  -      \\
1/16  &    1.37e-04  & 2.04  &   8.15e-03  &\color{blue}{ 1.00}  &   1.25e-02 &\color{blue}{  1.19}  &   2.12e-03 &  3.03   \\
1/32  &    3.40e-05  & 2.01  &   4.07e-03  &\color{blue}{ 1.00}  &   5.94e-03 &\color{blue}{  1.07}  &   2.52e-04 &  3.08   \\
1/64  &    8.86e-06  & 1.94  &   2.04e-03  &\color{blue}{ 1.00}  &   2.93e-03 &\color{blue}{  1.02}  &   2.98e-05 &  3.08   \\
1/128 &    2.33e-06  & 1.93  &   1.02e-03  &\color{blue}{ 1.00}  &   1.46e-03 &\color{blue}{  1.01}  &   3.56e-06 &  3.06   \\
\hline
\multicolumn{9}{|>{\columncolor{mypink}}c|}{Second order CFO with $\beta =1$ }\\
\hline
h & $\|u_h-u \|_{0}$ & rate & $\|u_h-u\|_{1}$ & rate &$ \|q_h-q \|_{0}$& rate  & $\|\lambda_h\|_{0}$ & rate  \\
\hline
1/8   &    2.07e-14 &  -     &   6.44e-14  & -     &   4.23e-12  &   -      &  8.26e-13 &  -       \\
1/16  &    6.43e-14 &  -     &   2.88e-13  & -     &   1.52e-11  &   -      &  2.26e-12 &  -       \\
1/32  &    1.99e-13 &  -     &   9.74e-13  & -     &   4.54e-11  &   -      &  7.33e-12 &  -       \\
1/64  &    9.64e-13 &  -     &   4.35e-12  & -     &   2.14e-10  &   -      &  3.62e-11 &  -       \\
1/128 &    3.80e-12 &  -     &   1.53e-11  & -     &   8.11e-10  &   -      &  1.47e-10 &  -       \\
\hline
\multicolumn{9}{|>{\columncolor{mypink}}c|}{Second order CFO with $\beta =2$ }\\
\hline
h & $\|u_h-u \|_{0}$ & rate & $\|u_h-u\|_{1}$ & rate &$ \|q_h-q \|_{0}$& rate & $\|\lambda_h\|_{0}$ & rate   \\
\hline
1/8   &    3.60e-15 &  -    &    2.51e-14 &  -     &   1.41e-12  &   -     &   2.01e-14 &  -       \\
1/16  &    2.84e-14 &  -    &    1.14e-13 &  -     &   6.51e-12  &   -     &   7.61e-14 &  -       \\
1/32  &    7.33e-14 &  -    &    2.01e-13 &  -     &   1.46e-11  &   -     &   1.04e-13 &  -       \\
1/64  &    9.85e-14 &  -    &    3.82e-13 &  -     &   1.98e-11  &   -     &   6.06e-14 &  -       \\
1/128 &    4.16e-13 &  -    &    1.04e-12 &  -     &   7.31e-11  &   -     &   1.35e-13 &  -       \\
\hline
\hline
\multicolumn{9}{|>{\columncolor{blue!20}}c|}{Third order CFO with $\beta =1$ }\\
\hline
h & $\|u_h-u \|_{0}$ & rate & $\|u_h-u\|_{1}$ & rate &$ \|q_h-q \|_{0}$& rate  & $\|\lambda_h\|_{0}$ & rate\\
\hline
1/8   &    1.04e-13 &   -   &    5.25e-13 &   -   &    3.48e-11 &   -   &    2.44e-13 &   -    \\
1/16  &    8.25e-13 &   -   &    3.20e-12 &   -   &    1.53e-10 &   -   &    1.38e-12 &   -    \\
1/32  &    3.93e-12 &   -   &    1.15e-11 &   -   &    6.77e-10 &   -   &    6.70e-12 &   -    \\
1/64  &    1.44e-11 &   -   &    3.93e-11 &   -   &    2.45e-09 &   -   &    2.47e-11 &   -    \\
1/128 &    5.21e-11 &   -   &    1.38e-10 &   -   &    8.89e-09 &   -   &    9.06e-11 &   -    \\
\hline
\multicolumn{9}{|>{\columncolor{blue!20}}c|}{Third order CFO with $\beta =2$ }\\
\hline
h & $\|u_h-u \|_{0}$ & rate & $\|u_h-u\|_{1}$ & rate &$ \|q_h-q \|_{0}$& rate  & $\|\lambda_h\|_{0}$ & rate  \\
\hline
1/8   &    1.27e-13 &   -    &   3.17e-13 &   -   &    1.88e-11 &   -   &    1.80e-13 &   -      \\
1/16  &    2.25e-13 &   -    &   5.46e-13 &   -   &    3.27e-11 &   -   &    2.34e-13 &   -      \\
1/32  &    6.29e-13 &   -    &   1.38e-12 &   -   &    9.26e-11 &   -   &    4.70e-13 &   -      \\
1/64  &    5.25e-13 &   -    &   1.44e-12 &   -   &    9.33e-11 &   -   &    2.73e-13 &   -      \\
1/128 &    1.35e-12 &   -    &   3.28e-12 &   -   &    2.46e-10 &   -   &    4.20e-13 &   -      \\
\hline
\end{tabular}
\end{center}
\end{table}

\subsection{Test Case 4: Discontinuous Coefficients}
In this test case, the domain $\Omega=(-1,1)^2$ is split into four subdomains $\Omega = \bigcup_{i=1}^{4} \Omega_i$ by the $x$ and $y$ axis. The diffusion coefficient $\alpha$ is given by
\begin{eqnarray*}
\alpha= \begin{pmatrix}
        \alpha_i^x & 0 \\
        0 & \alpha_i^y
       \end{pmatrix}, \text{ if } (x,y) \in \Omega_i,\\
\end{eqnarray*}
and the exact solution is given by $u(x,y)=\alpha_i \sin(2\pi x)\sin(2\pi y)$. Here the values of the coefficient $\alpha_i^x,\,\alpha_i^y$ and $\alpha_i$ are specified in Table \ref{table6}. It is clear that the diffusion coefficient $\alpha$ is discontinuous across the lines $x = 0$ and $y = 0$.

\begin{table}[h]
\caption{Test Case 4: Parameter values for the diffusion coefficients and the exact solution.}\label{table6}
\begin{center}
\begin{tabular}{|l|l|}
\hline
&\\[-7pt]
$\alpha^x_4 = 0.1$ & $\alpha^x_3 = 1000$   \\[1pt]
$\alpha^y_4 =0.01$ & $\alpha^y_3 = 100 $   \\[1pt]
$\alpha_4   = 100$ & $\alpha_3   = 0.01$   \\[3pt]
\hline
&\\ [-7pt]
$\alpha^x_1 = 100$   & $\alpha^x_2 = 1  $\\[1pt]
$\alpha^y_1 = 10 $   & $\alpha^y_2 = 0.1$\\[1pt]
$\alpha_1   = 0.1$   & $\alpha_2   = 10 $\\ [3pt]
\hline
\end{tabular}
\end{center}
\end{table}

Tables \ref{table44-h12}, \ref{table44-h-beta-12} and \ref{table44-FE-beta120-1} present the numerical performance of the CFO scheme \eqref{min-LagrangeForm2-1}-\eqref{min-LagrangeForm2-2} when applied to the present test case. The results suggest an optimal order of convergence for the numerical approximation $u_h$ in the usual $H^1$ norm and the flux variable $q_h$ in $L^2$. The numerical results are in great consistency with the error estimate developed in the previous section.

\begin{table}[!h]
\scriptsize
\begin{center}
\caption{Error and convergence performance of the CFO scheme for Test Case 4.}\label{table44-h12}
\begin{tabular}{|c|cc|cc|cc|cc|}
\hline
\multicolumn{9}{|>{\columncolor{yellow!20}}c|}{ First order CFO with $\beta =1$ }\\
\hline
h & $\|u_h-u \|_{0}$ & rate & $\|u_h-u\|_{1}$ & rate &$ \|q_h-q \|_{0}$& rate  & $\|\lambda_h\|_{0}$ & rate  \\
\hline
1/8   &    1.97e-01   &-     &   2.04e-01 &\color{blue}{  -   }  &   3.46e-01  &\color{blue}{ -   }  &   3.35e+00 &  -    \\
1/16  &    5.49e-02   &1.84  &   9.93e-02 &\color{blue}{  1.04}  &   1.65e-01  &\color{blue}{ 1.07}  &   1.19e+00 &  1.50 \\
1/32  &    1.40e-02   &1.97  &   4.92e-02 &\color{blue}{  1.01}  &   7.39e-02  &\color{blue}{ 1.16}  &   3.56e-01 &  1.74 \\
1/64  &    3.52e-03   &2.00  &   2.46e-02 &\color{blue}{  1.00}  &   3.42e-02  &\color{blue}{ 1.11}  &   9.72e-02 &  1.87 \\
1/128 &    8.78e-04   &2.00  &   1.23e-02 &\color{blue}{  1.00}  &   1.65e-02  &\color{blue}{ 1.05}  &   2.52e-02 &  1.95 \\
\hline
\multicolumn{9}{|>{\columncolor{yellow!20}}c|}{First order CFO with $\beta =2$ }\\
\hline
h & $\|u_h-u \|_{0}$ & rate & $\|u_h-u\|_{1}$ & rate &$ \|q_h-q \|_{0}$& rate  & $\|\lambda_h\|_{0}$ & rate   \\
\hline
1/8   &    1.68e-01 &  -    &    2.00e-01 &\color{blue}{  -    } &   3.30e-01 &\color{blue}{  -   }  &   2.73e-01  & -    \\
1/16  &    4.52e-02 &  1.90 &    9.87e-02 &\color{blue}{  1.02 } &   1.56e-01 &\color{blue}{  1.09}  &   4.64e-02  & 2.56 \\
1/32  &    1.15e-02 &  1.98 &    4.91e-02 &\color{blue}{  1.01 } &   7.00e-02 &\color{blue}{  1.15}  &   6.44e-03  & 2.85 \\
1/64  &    2.87e-03 &  2.00 &    2.46e-02 &\color{blue}{  1.00 } &   3.31e-02 &\color{blue}{  1.08}  &   7.89e-04  & 3.03 \\
1/128 &    7.18e-04 &  2.00 &    1.23e-02 &\color{blue}{  1.00 } &   1.62e-02 &\color{blue}{  1.03}  &   9.02e-05  & 3.13 \\
\hline
\multicolumn{9}{|>{\columncolor{mypink}}c|}{Second order CFO with $\beta =1$ }\\
\hline
h & $\|u_h-u \|_{0}$ & rate & $\|u_h-u\|_{1}$ & rate &$ \|q_h-q \|_{0}$& rate  & $\|\lambda_h\|_{0}$ & rate  \\
\hline
1/8   &    9.60e-02 &   -    &   7.38e-02 &\color{blue}{   -  }  &   5.46e+00  &\color{blue}{   -  }   &  1.55e-01  &  -   \\
1/16  &    2.61e-02 &  1.88  &   2.02e-02 &\color{blue}{  1.87}  &   1.41e+00  &\color{blue}{  1.95}   &  1.79e-02  & 3.11 \\
1/32  &    6.94e-03 &  1.91  &   4.71e-03 &\color{blue}{  2.10}  &   3.56e-01  &\color{blue}{  1.99}   &  2.03e-03  & 3.13 \\
1/64  &    1.78e-03 &  1.97  &   1.08e-03 &\color{blue}{  2.13}  &   8.87e-02  &\color{blue}{  2.00}   &  2.42e-04  & 3.07 \\
1/128 &    4.47e-04 &  1.99  &   2.59e-04 &\color{blue}{  2.06}  &   2.21e-02  &\color{blue}{  2.00}   &  2.99e-05  & 3.02 \\
\hline
\multicolumn{9}{|>{\columncolor{mypink}}c|}{Second order CFO with $\beta =2$ }\\
\hline
h & $\|u_h-u \|_{0}$ & rate & $\|u_h-u\|_{1}$ & rate &$ \|q_h-q \|_{0}$& rate & $\|\lambda_h\|_{0}$ & rate   \\
\hline
\hline
1/8   &    8.39e-03 &   -    &   3.17e-02 &\color{blue}{   -  } &    7.30e+00 &\color{blue}{    -  }   &  2.47e-02 &   -    \\
1/16  &    9.67e-04 &  3.12  &   7.79e-03 &\color{blue}{  2.02} &    1.85e+00 &\color{blue}{   1.98}   &  3.64e-03 &  2.76  \\
1/32  &    1.17e-04 &  3.05  &   1.92e-03 &\color{blue}{  2.02} &    4.57e-01 &\color{blue}{   2.02}   &  4.94e-04 &  2.88  \\
1/64  &    1.44e-05 &  3.02  &   4.76e-04 &\color{blue}{  2.01} &    1.13e-01 &\color{blue}{   2.02}   &  6.55e-05 &  2.92  \\
1/128 &    1.80e-06 &  3.01  &   1.19e-04 &\color{blue}{  2.00} &    2.78e-02 &\color{blue}{   2.02}   &  8.62e-06 &  2.93  \\
\hline
\multicolumn{9}{|>{\columncolor{blue!20}}c|}{Third order CFO with $\beta =1$ }\\
\hline
h & $\|u_h-u \|_{0}$ & rate & $\|u_h-u\|_{1}$ & rate &$ \|q_h-q \|_{0}$& rate  & $\|\lambda_h\|_{0}$ & rate\\
\hline
1/8   &    1.16e-03 &    -   &   2.56e-03  &\color{blue}{   - } &    1.65e+00 &\color{blue}{    - }   &  1.64e-02  &   -   \\
1/16  &    6.74e-05 &  4.10  &   2.84e-04  &\color{blue}{ 3.17} &    3.12e-01 &\color{blue}{  2.40}   &  1.17e-03  & 3.81  \\
1/32  &    4.05e-06 &  4.06  &   3.33e-05  &\color{blue}{ 3.10} &    5.75e-02 &\color{blue}{  2.44}   &  7.77e-05  & 3.91  \\
1/64  &    2.50e-07 &  4.01  &   4.06e-06  &\color{blue}{ 3.04} &    9.60e-03 &\color{blue}{  2.58}   &  5.13e-06  & 3.92  \\
1/128 &    1.57e-08 &  4.00  &   5.03e-07  &\color{blue}{ 3.01} &    1.30e-03 &\color{blue}{  2.89}   &  3.38e-07  & 3.92  \\
\hline
\multicolumn{9}{|>{\columncolor{blue!20}}c|}{Third order CFO with $\beta =2$ }\\
\hline
1/8   &    9.66e-04 &    -   &   2.26e-03 &\color{blue}{    - }  &   1.05e+00 &\color{blue}{    - } &    1.33e-03 &    -   \\
1/16  &    5.53e-05 &  4.13  &   2.60e-04 &\color{blue}{  3.12}  &   1.36e-01 &\color{blue}{  2.96} &    4.79e-05 &  4.79  \\
1/32  &    3.29e-06 &  4.07  &   3.14e-05 &\color{blue}{  3.05}  &   1.23e-02 &\color{blue}{  3.47} &    1.64e-06 &  4.87  \\
1/64  &    2.02e-07 &  4.03  &   3.87e-06 &\color{blue}{  3.02}  &   1.17e-03 &\color{blue}{  3.39} &    5.38e-08 &  4.93  \\
1/128 &    1.25e-08 &  4.01  &   4.81e-07 &\color{blue}{  3.01}  &   1.36e-04 &\color{blue}{  3.11} &    1.73e-09 &  4.96  \\
\hline
\end{tabular}
\end{center}
\end{table}

\subsection{The Lagrange multiplier $\lambda_h$}
The CFO algorithms involve two essential ideas in flux approximation: (1) the satisfaction of the mass conservation equation, and (2) the minimization of the object function $J_r(v,p)$ defined as in (\ref{EQ:functional}). As the value of the PDE coefficients may vary from element to element, one may modify the object function as follows by placing a weight $\tau_T$ on each element:
\begin{equation}\label{EQ:functional-new}
J_r^\beta(p,v):= \frac{1}{r}\sum_{T\in\T_h} \sum_{e\subset\pT} \tau_D h^\beta_D \int_e |p+\alpha\nabla v \cdot \bn_e |^r ds.
\end{equation}
For the CFO algorithm \eqref{EQ:argmin}, it can be seen from \eqref{min-LagrangeForm2-1}-\eqref{min-LagrangeForm2-2} that the weight parameter $\tau_D$ is automatically adjusted by the Lagrange multiplier $\lambda_h$ on each element, as the weight $\tau_D$ can be easily absorbed by $\lambda_h$ through the same scaling on each element. Therefore, the CFO algorithm is quite robust in the minimization part.

Figure \ref{fig:lambda} (or \ref{fig:lambda2order} and \ref{fig:lambda3order}) shows the surface plot of the Lagrange multiplier $\lambda_h$ obtained from first order (or second and third order) CFO scheme for each test case. It can be seen that $\lambda_h$ is quite sensitive to the continuity and smoothness of the true solution. Figure \ref{fig:lambda1order-error-l2}, \ref{fig:lambda1order-error-l2-test2}-\ref{fig:lambda1order-error-l2-test4} compare the square of $\lambda_h$ with $\|u_h-u \|^2_{0}$ norm for the first order scheme. We observe that when $\alpha$ is the identity matrix, $\lambda_h^2$ show exactly the same tendency with the $L_2$ error of between the exact solution and the numerical results from scheme \eqref{min-LagrangeForm2-1}-\eqref{min-LagrangeForm2-2}. The results for the second/third order scheme are included in the Appendix (Fig. \ref{fig:lambda2order-error-l2}-\ref{fig:lambda3order-error-l2-test4}), which show the same property as for the first order scheme. We conjecture that $\lambda_h$ would play the role of a posteriori error estimator in adaptive grid local refinements.

\begin{figure}[!h]
\centering
\subfigure[Smooth coefficient (left) and H\"older continuous coefficient (right)]{
\label{Fig.sub.1.lam}
\includegraphics [width=0.35\textwidth]{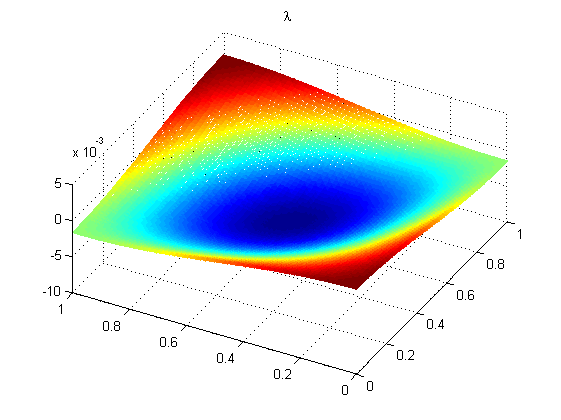}
\includegraphics [width=0.35\textwidth]{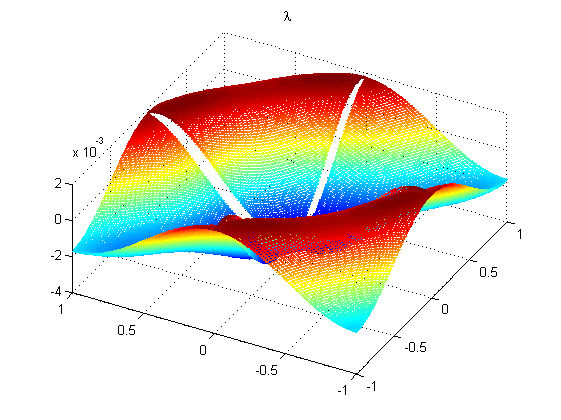}
}\\
\subfigure[Discontinuous coefficients: test case 3 (left) and test case 4 (right)]{
\label{Fig.sub.2.lam}
\includegraphics [width=0.35\textwidth]{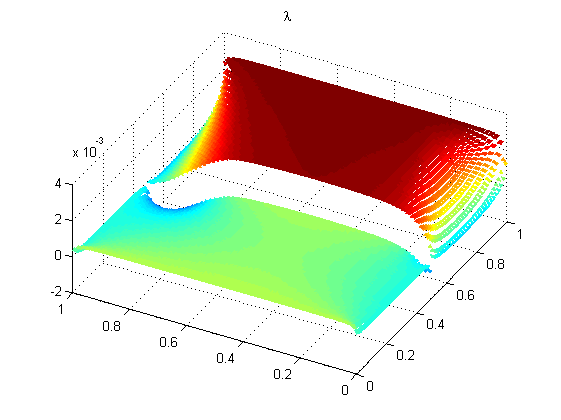}
\includegraphics [width=0.35\textwidth]{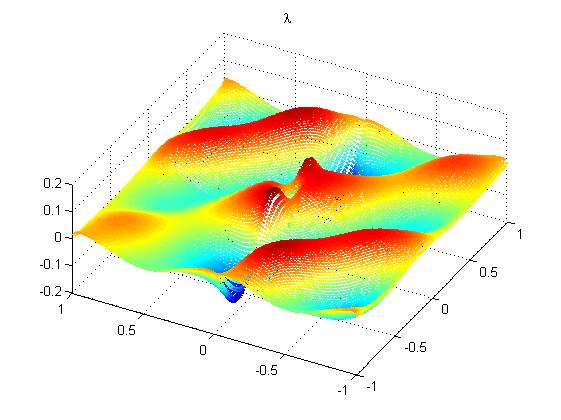}
}\\
\caption{The solution profile for the Lagrange multiplier $\lambda_h$ on a partition of size $64\times 64$
 arising from the first order CFO scheme (\ref{min-LagrangeForm2-1}-\ref{min-LagrangeForm2-2}).}
\label{fig:lambda}
\end{figure}
\begin{figure}
\centering
\subfigure[$\lambda_h^2$ (left, stereogram above, plane diagram below) and $\|u_h-u \|^2_{0}$ (right, stereogram above, plane diagram below) ]{
\label{Fig.sub.1.lam1order-error}
\includegraphics [width=0.95\textwidth]{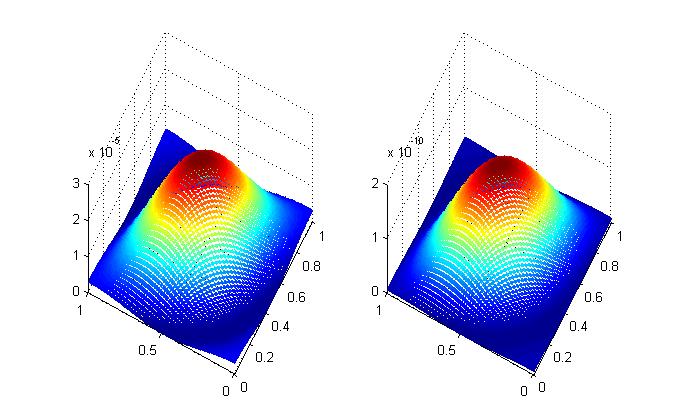}
}\\
\includegraphics [width=0.95\textwidth]{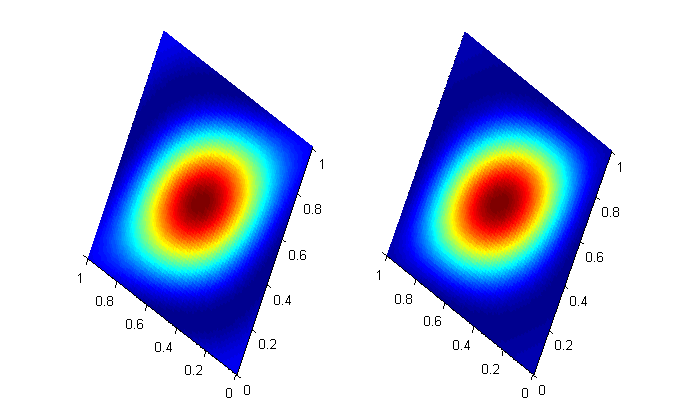}
\caption{The solution profile for the square of Lagrange multiplier $\lambda_h^2$ on a partition of size $64\times 64$
 arising from the first order CFO scheme (\ref{min-LagrangeForm2-1}-\ref{min-LagrangeForm2-2}) and the $L_2$ error of the primal variable $\|u_h-u \|^2_{0}$ for test case 1.}
\label{fig:lambda1order-error-l2}
\end{figure}

\subsection{A Two-Phase Flow in Porous Media}
We consider a simplified two-phase flow problem in porous media which has been studied in several existing literatures \cite{Bush_SIAMJSC_2013,Bush_JCAM_2014}.
It seeks a saturation function $S$ and fluid pressure $p$ satisfying
\begin{eqnarray}
-\nabla\cdot(\lambda(S)\kappa(x,y) \nabla p)&=&0, \qquad {\rm in}\  \Omega=(0,1)^2, \label{two-phase-elliptic}\\
\frac{\partial S}{\partial t} +\d(\bv f(S))&=&0,\qquad  t>0, \label{two-phase-transport}
\end{eqnarray}
with the boundary condition
\begin{eqnarray}
  p(0,y)=1, & & \; p(1,y)=0,\label{eq:001}\\
  \bv(x,0)\cdot \bn = 0, && \;  \bv(x,1)\cdot \bn=0,\label{eq:002}
\end{eqnarray}
for the fluid pressure $p$ and the following initial and boundary conditions for
the saturation:
\begin{eqnarray}
S(0,y,t)&=&1,\qquad t \geq 0, \ y\in (0,1),\\
S(x,y,0) &=& 0,\qquad (x,y)\in (0,1)^2.
\end{eqnarray}
Here in (\ref{two-phase-transport}) and the boundary condition \eqref{eq:002}, $\bv=-\lambda(S)\kappa(x,y) \nabla p$ is the Darcy's velocity of the fluid, $\kappa(x,y)$ is the permeability of the porous media, $f(S)$ is the fractional flow function, and $\lambda(S)$ is the total mobility. In our numerical study, we consider a permeability profile (Figure \ref{fig:permeability}) generated using the experimental data from the SPE comparative solution project \cite{Christie_SPE_2001}.
The permeability profiles in Figure \ref{fig:permeability} are plotted in a logarithmic scale, it shows that the permeability coefficients is highly heterogenous.

\begin{figure}
\centering
\includegraphics [width=0.45\textwidth]{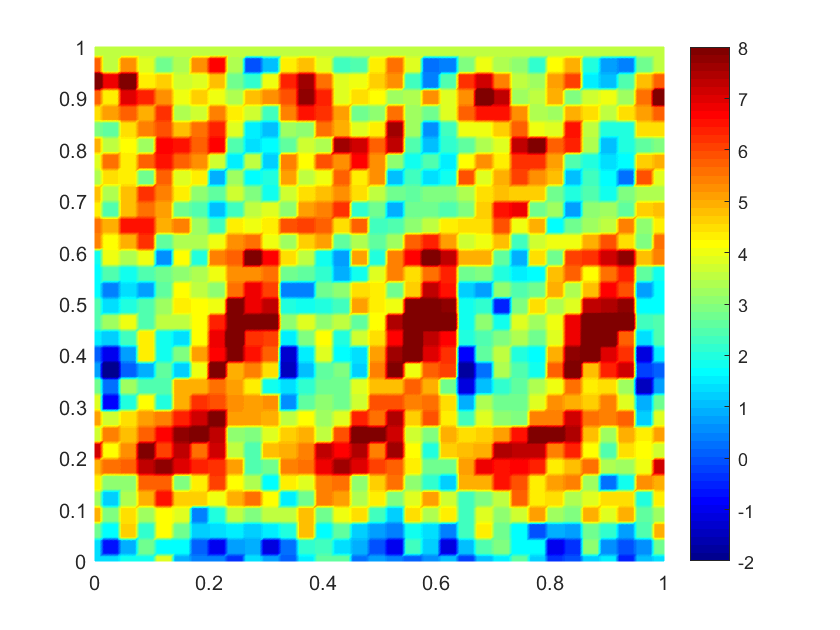}
\caption{Permeability Data obtained from the SPE comparative solution project \cite{Christie_SPE_2001} plotted in the logarithmic scale.}
\label{fig:permeability}
\end{figure}

We use a classical operator splitting technique \cite{Aziz_Chapman_1979} to solve the above system.
That is, we substitute the saturation at the previous time step into \eqref{two-phase-elliptic} to compute the pressure $p$, the Darcy's velocity $\bv$ and the locally conservative flux by using the CFO algorithms. Then we solve the transport equation \eqref{two-phase-transport} by an explicit time stepping scheme (see \cite{LiuWang_SINUM_2017} for details).

The saturation profiles of the two-phase flow for the permeability profile Fig. \ref{fig:permeability} at time $T= 0.02$  are shown in Figure \ref{fig:saturation}.
These profiles are obtained by using the 1st, 2nd and 3rd order CFO schemes respectively.
For each scheme, the computation is performed on meshes of $32\times32$, $64\times64$ and $128\times128 $ partitions.
These results match the physical tendency as described in \cite{Bush_SIAMJSC_2013, EfendievGinting_JCP_2006,LiuWang_SINUM_2017}.
Since the permeability coefficients is highly heterogenous, which varies from $e^{-2}$ to $e^{9}$,
it is very difficult to obtain convergence with our limiting mesh refinement.
The results show obviously that the higher order schemes allow to better capture the physical tendency, especially on coarse meshes.

\begin{figure}
\centering
~~~~~~~~~~~~~~$32\times32$~~~~~~~~~~~~~~~~~~~~~~~~~~~~$64\times64$~~~~~~~~~~~~~~~~~~~~~~~~~~~$128\times128$~~~~~~~~~~~~~~~\\
\vskip-1pt
\subfigure[Saturation obtained with the first order CFO scheme]{
\label{Fig.sat.1}
\includegraphics [width=0.315\textwidth]{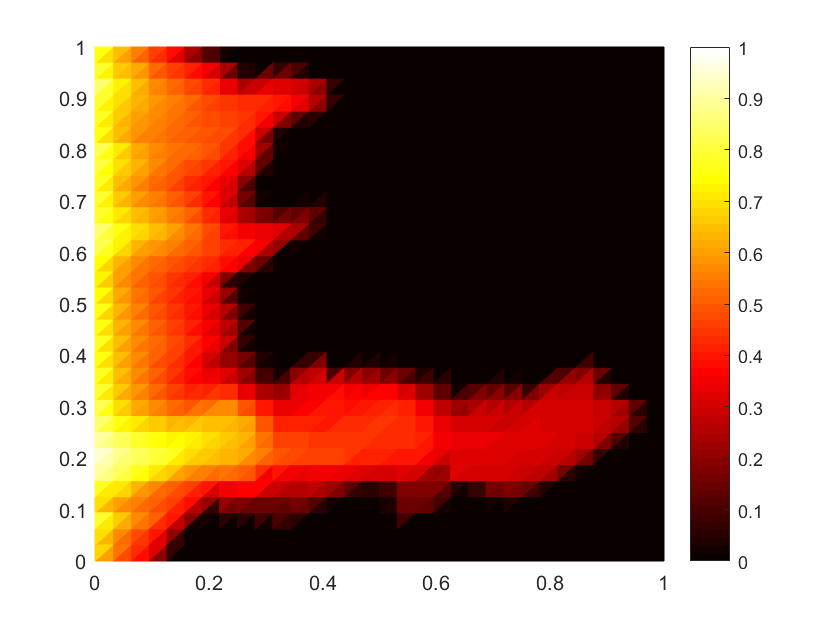}
\includegraphics [width=0.315\textwidth]{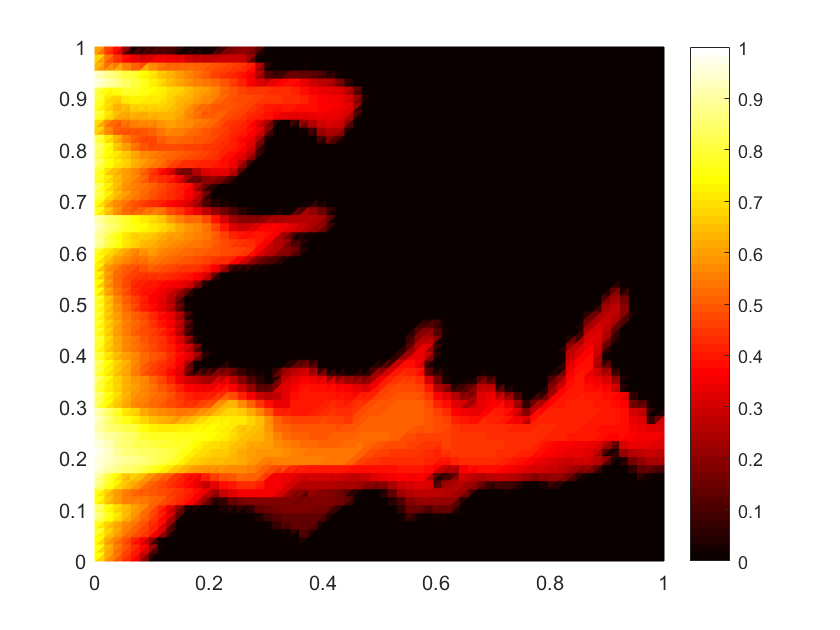}
\includegraphics [width=0.315\textwidth]{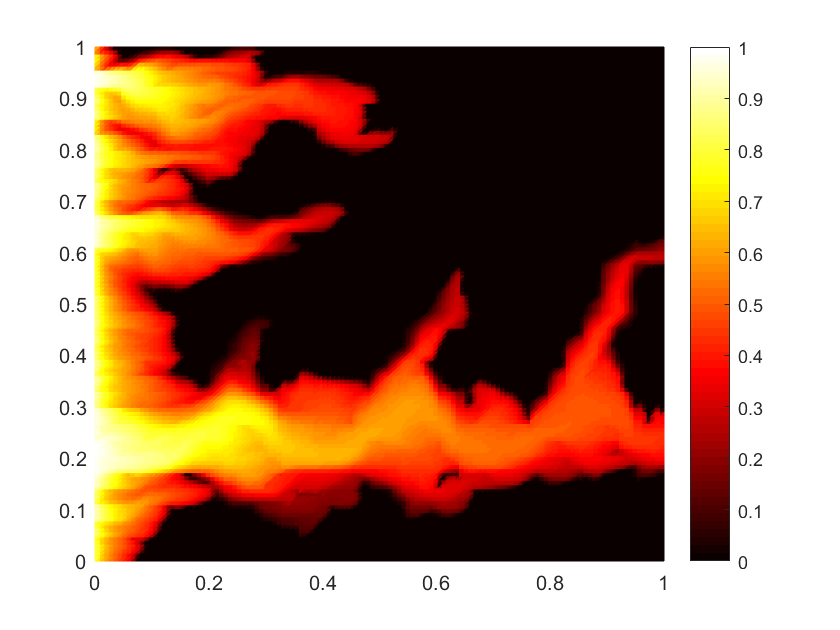}
}\\
~~~~~~~~~~~~~~$32\times32$~~~~~~~~~~~~~~~~~~~~~~~~~~~~$64\times64$~~~~~~~~~~~~~~~~~~~~~~~~~~~$128\times128$~~~~~~~~~~~~~~~\\
\vskip-1pt
\subfigure[Saturation obtained with the second order CFO scheme]{
\label{Fig.sat.2}
\includegraphics [width=0.315\textwidth]{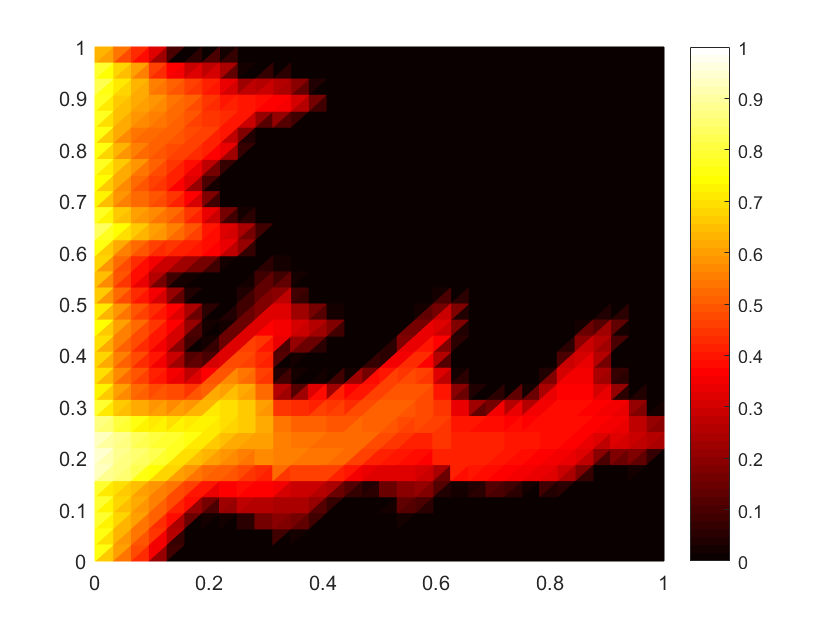}
\includegraphics [width=0.315\textwidth]{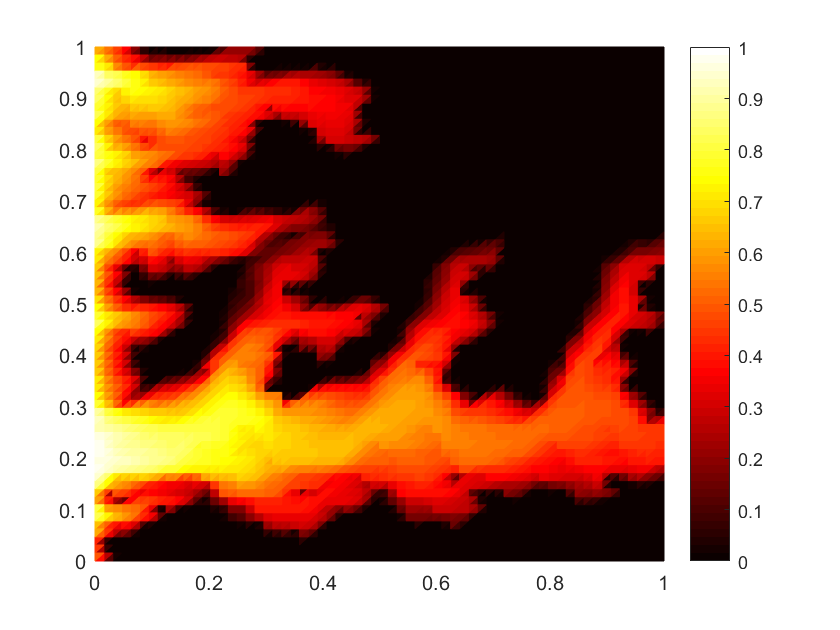}
\includegraphics [width=0.315\textwidth]{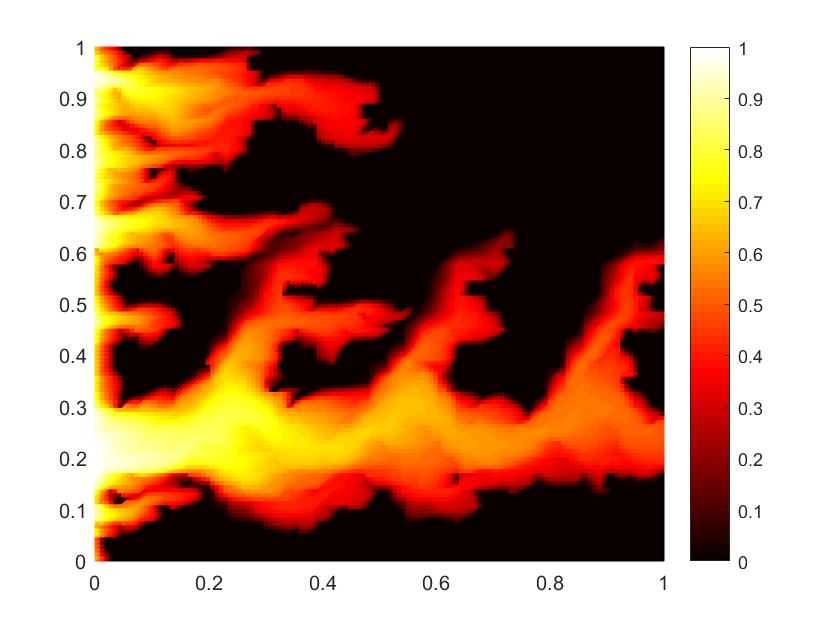}
}
\\
~~~~~~~~~~~~~~$32\times32$~~~~~~~~~~~~~~~~~~~~~~~~~~~~$64\times64$~~~~~~~~~~~~~~~~~~~~~~~~~~~$128\times128$~~~~~~~~~~~~~~~\\
\vskip-1pt
\subfigure[Saturation obtained with the third order CFO scheme]{
\label{Fig.sat.3}
\includegraphics [width=0.315\textwidth]{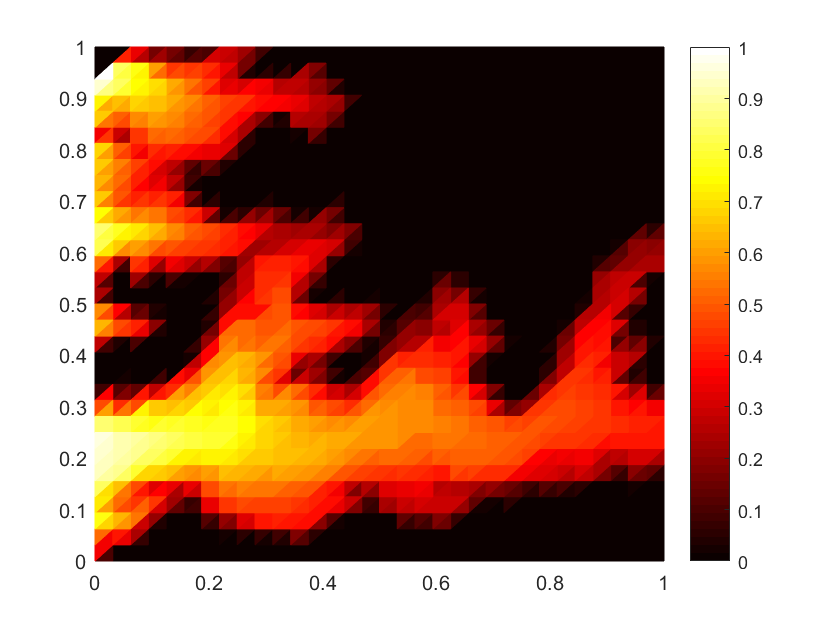}
\includegraphics [width=0.315\textwidth]{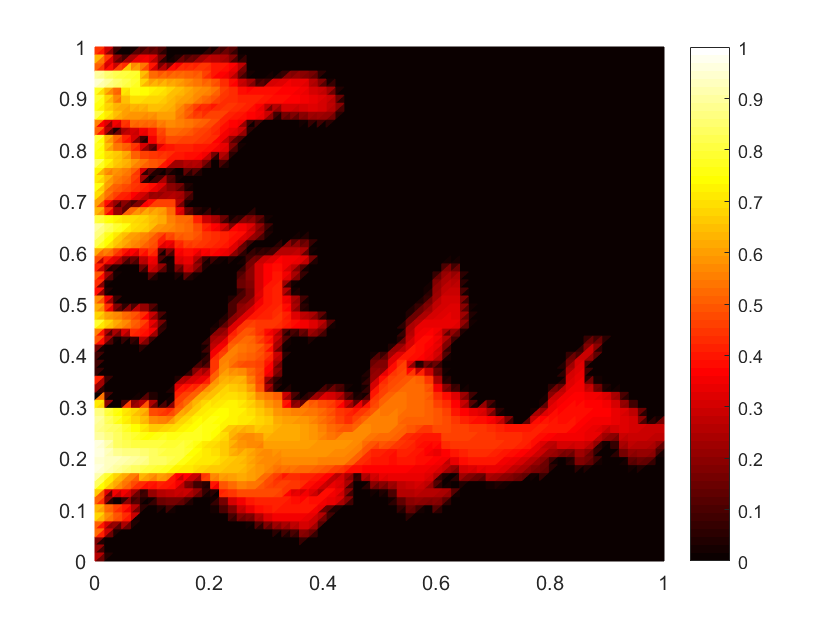}
\includegraphics [width=0.315\textwidth]{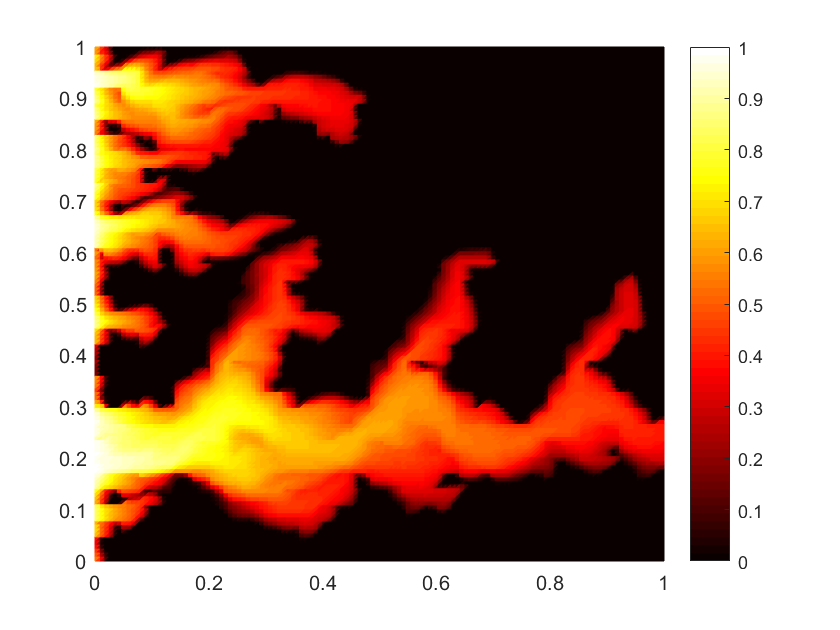}
}
\caption{Saturation profiles at time levels $T=0.02$ for the two-phase flow test case with the permeability profile Fig.\ref{fig:permeability}.}
\label{fig:saturation}
\end{figure}

\section*{Acknowledgements}
The authors are grateful to Todd Arbogast, Malgorzata Peszynska, Ralph Showalter, Junping Wang and Son-Young Yi for a helpful discussion of this work during the SIAM 2017 Mathematical and Computational Geoscience conference in Erlangen Germany. This discussion has led to the name of ``flux optimization" for the numerical scheme developed in this paper.


\clearpage
\appendix
\section{Numerical results}

\subsection{Numerical errors \& Convergence rate}\label{ErrorTables}
~
\begin{table}[!ht]
\scriptsize
\begin{center}
\caption{Error and convergence performance of the CFO scheme for Test Case 1 with $\beta=0,-1$.}\label{table11-h-beta-12}

\end{center}
\end{table}
\clearpage
\subsection{Lagrange multipliers}\label{Lagrangemultipliers}
~
\begin{figure}[!h]
\centering
\subfigure[Smooth coefficient (left) and H\"older continuous coefficient (right)]{
\label{Fig.sub.1.lam2order}
\includegraphics [width=0.35\textwidth]{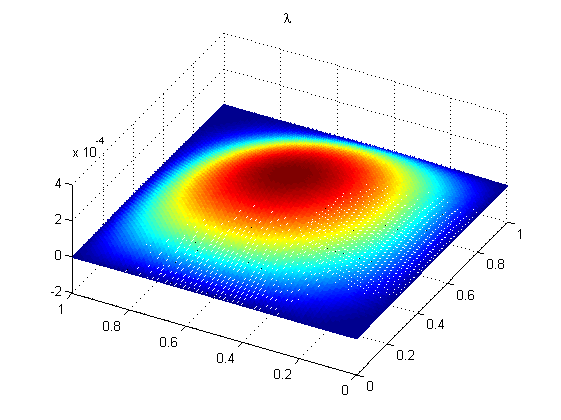}
\includegraphics [width=0.35\textwidth]{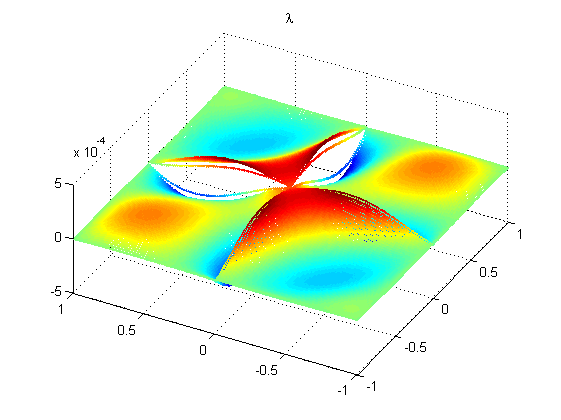}
}\\
\subfigure[Discontinuous coefficients: test case 3 (left) and test case 4 (right)]{
\label{Fig.sub.2.lam2order}
\includegraphics [width=0.35\textwidth]{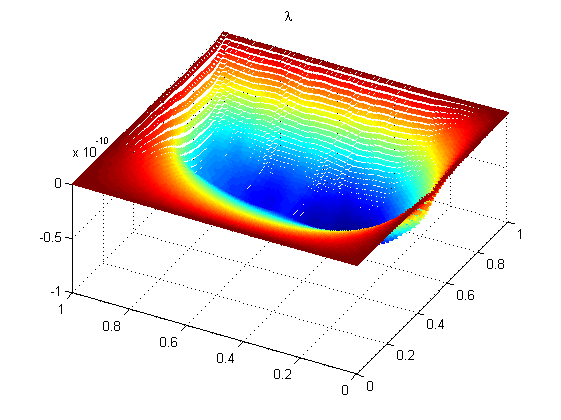}
\includegraphics [width=0.35\textwidth]{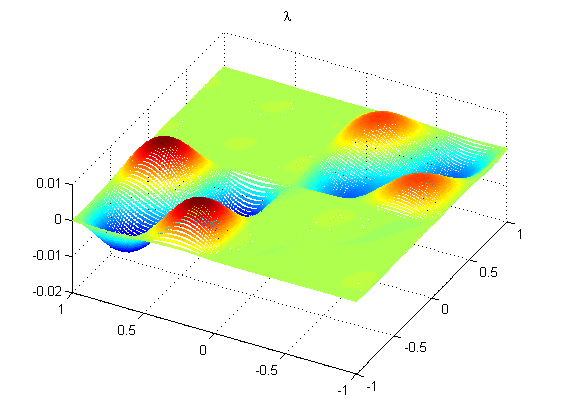}
}\\
\caption{The solution profile for the Lagrange multiplier $\lambda_h$ on a partition of size $64\times 64$
 arising from the second order CFO scheme (\ref{min-LagrangeForm2-1}-\ref{min-LagrangeForm2-2}).}
\label{fig:lambda2order}
\end{figure}

\begin{figure}[!ht]
\centering
\subfigure[Smooth coefficient (left) and H\"older continuous coefficient (right)]{
\label{Fig.sub.1.lam3order}
\includegraphics [width=0.35\textwidth]{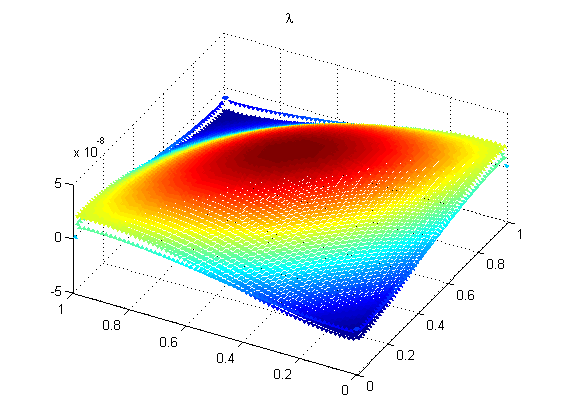}
\includegraphics [width=0.35\textwidth]{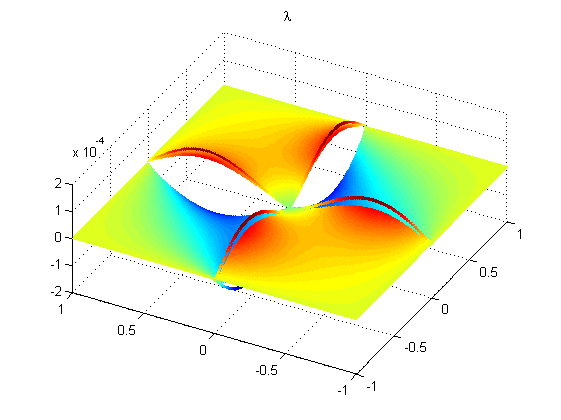}
}\\
\subfigure[Discontinuous coefficients: test case 3 (left) and test case 4 (right)]{
\label{Fig.sub.2.lam3order}
\includegraphics [width=0.35\textwidth]{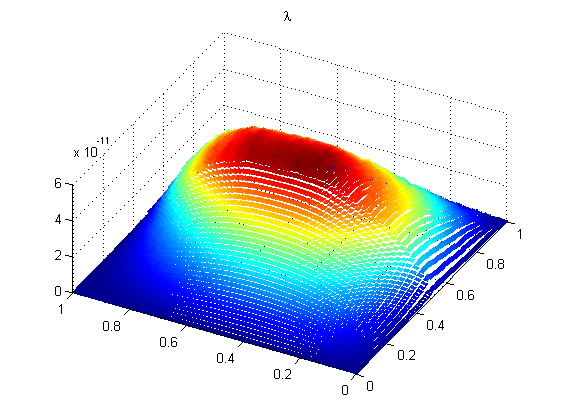}
\includegraphics [width=0.35\textwidth]{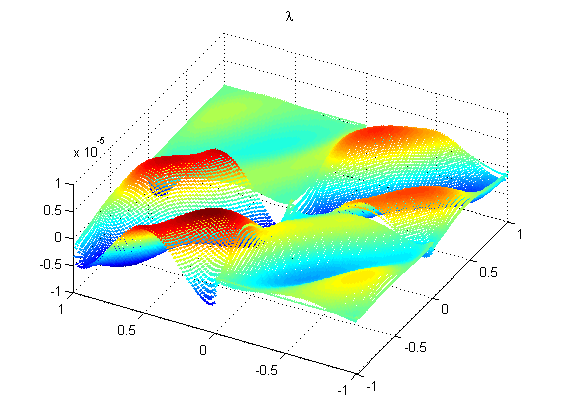}
}\\
\caption{The solution profile for the Lagrange multiplier $\lambda_h$ on a partition of size $64\times 64$
 arising from the third order CFO scheme (\ref{min-LagrangeForm2-1}-\ref{min-LagrangeForm2-2}).}
\label{fig:lambda3order}
\end{figure}
\clearpage
\begin{figure}
\centering
\subfigure[$\lambda_h^2$ (left, stereogram above, plane diagram below) and $\|u_h-u \|^2_{0}$ (right, stereogram above, plane diagram below) ]{
\label{Fig.sub.1.lam1order-error-test2}
\includegraphics [width=0.95\textwidth]{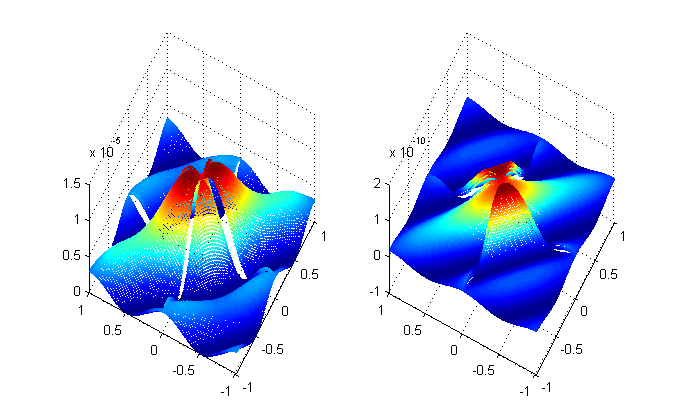}
}\\
\includegraphics [width=0.95\textwidth]{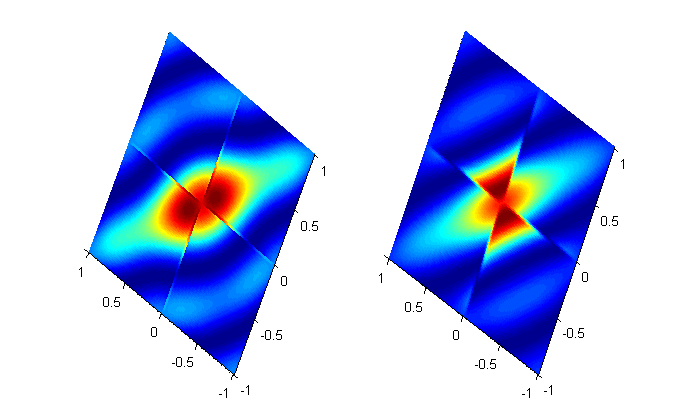}
\caption{The solution profile for the square of Lagrange multiplier $\lambda_h^2$ on a partition of size $64\times 64$
 arising from the first order CFO scheme (\ref{min-LagrangeForm2-1}-\ref{min-LagrangeForm2-2}) and the $L_2$ error of the primal variable $\|u_h-u \|^2_{0}$ for test case 2.}
\label{fig:lambda1order-error-l2-test2}
\end{figure}

\begin{figure}
\centering
\subfigure[$\lambda_h^2$ (left, stereogram above, plane diagram below) and $\|u_h-u \|^2_{0}$ (right, stereogram above, plane diagram below) ]{
\label{Fig.sub.1.lam1order-error-test3}
\includegraphics [width=0.95\textwidth]{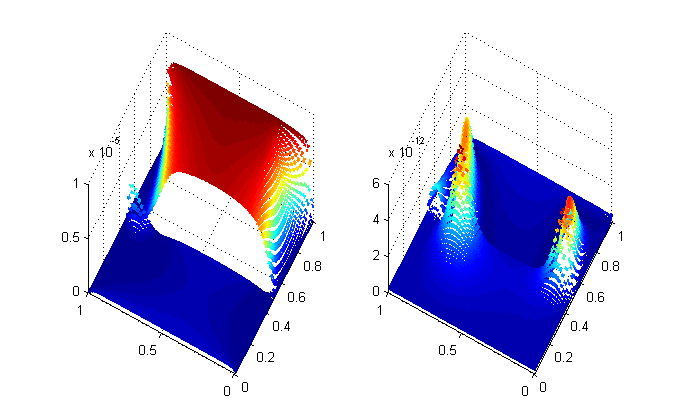}
}\\
\includegraphics [width=0.95\textwidth]{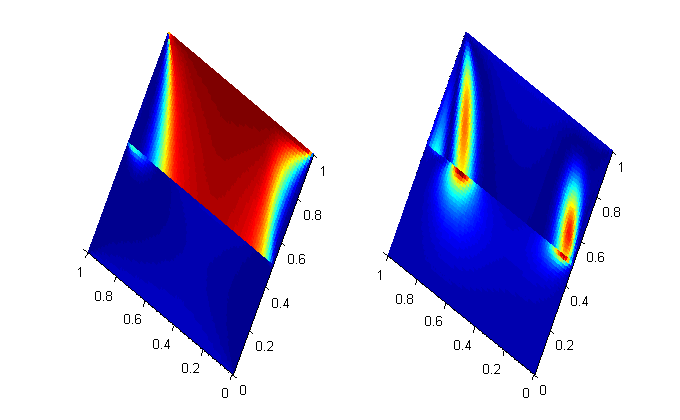}
\caption{The solution profile for the square of Lagrange multiplier $\lambda_h^2$ on a partition of size $64\times 64$
 arising from the first order CFO scheme (\ref{min-LagrangeForm2-1}-\ref{min-LagrangeForm2-2}) and the $L_2$ error of the primal variable $\|u_h-u \|^2_{0}$ for test case 3.}
\label{fig:lambda1order-error-l2-test3}
\end{figure}

\begin{figure}
\centering
\subfigure[$\lambda_h^2$ (left, stereogram above, plane diagram below) and $\|u_h-u \|^2_{0}$ (right, stereogram above, plane diagram below) ]{
\label{Fig.sub.1.lam1order-error-test4}
\includegraphics [width=0.95\textwidth]{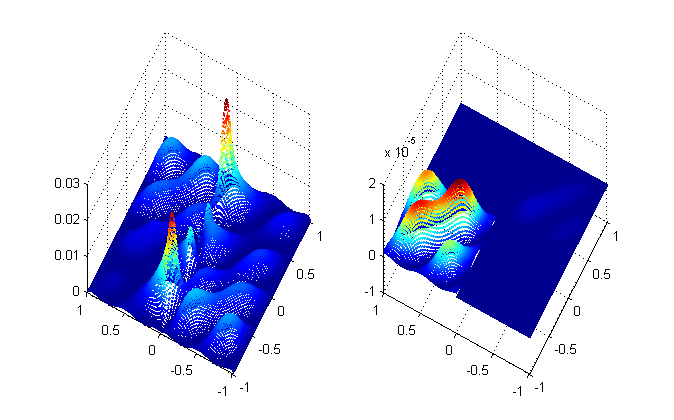}
}\\
\includegraphics [width=0.95\textwidth]{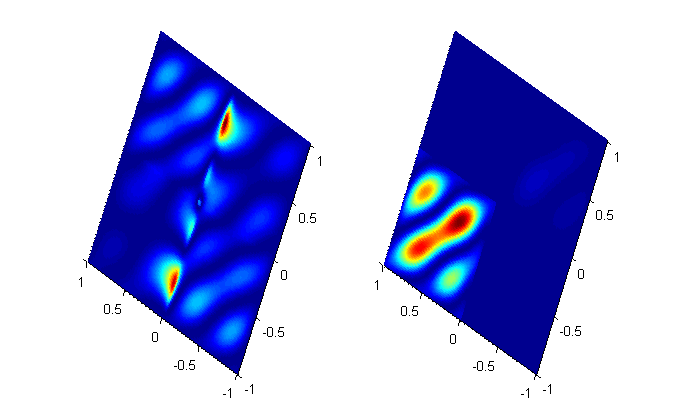}
\caption{The solution profile for the square of Lagrange multiplier $\lambda_h^2$ on a partition of size $64\times 64$
 arising from the first order CFO scheme (\ref{min-LagrangeForm2-1}-\ref{min-LagrangeForm2-2}) and the $L_2$ error of the primal variable $\|u_h-u \|^2_{0}$ for test case 4.}
\label{fig:lambda1order-error-l2-test4}
\end{figure}
\begin{figure}
\centering
\subfigure[$\lambda_h^2$ (left, stereogram above, plane diagram below) and $\|u_h-u \|^2_{0}$ (right, stereogram above, plane diagram below) ]{
\label{Fig.sub.1.lam2order-error}
\includegraphics [width=0.95\textwidth]{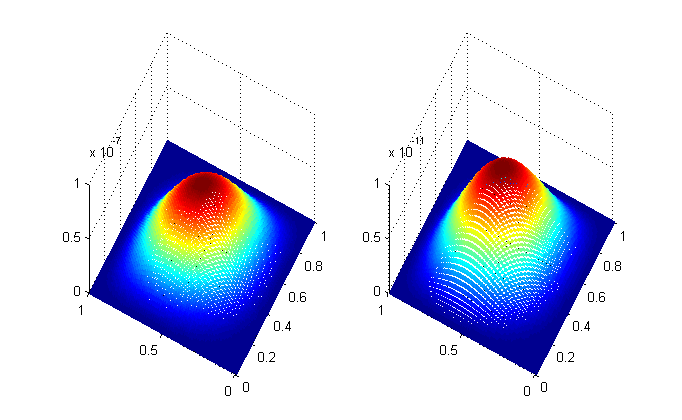}
}\\
\includegraphics [width=0.95\textwidth]{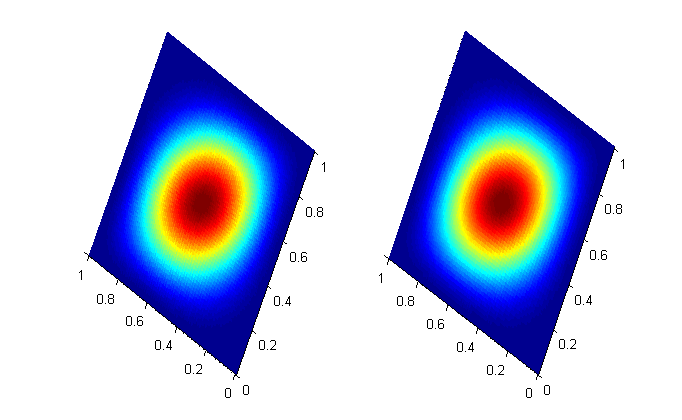}
\caption{The solution profile for the square of Lagrange multiplier $\lambda_h^2$ on a partition of size $64\times 64$
 arising from the second order CFO scheme (\ref{min-LagrangeForm2-1}-\ref{min-LagrangeForm2-2}) and the $L_2$ error of the primal variable $\|u_h-u \|^2_{0}$ for test case 1.}
\label{fig:lambda2order-error-l2}
\end{figure}

\begin{figure}
\centering
\subfigure[$\lambda_h^2$ (left, stereogram above, plane diagram below) and $\|u_h-u \|^2_{0}$ (right, stereogram above, plane diagram below) ]{
\label{Fig.sub.1.lam2order-error-test2}
\includegraphics [width=0.95\textwidth]{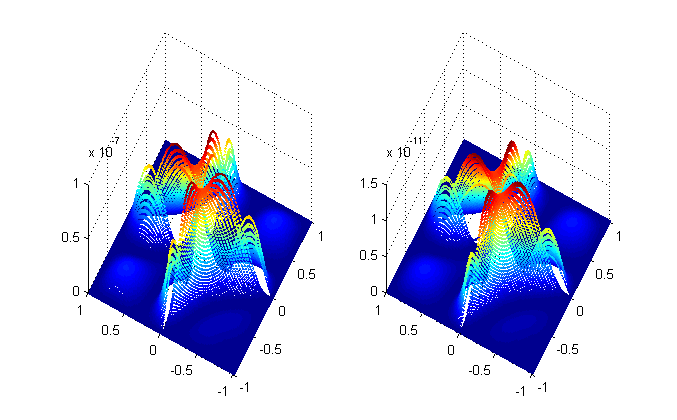}
}\\
\includegraphics [width=0.95\textwidth]{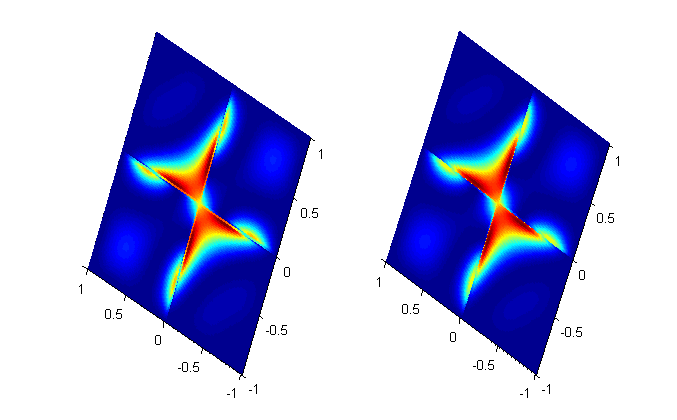}
\caption{The solution profile for the square of Lagrange multiplier $\lambda_h^2$ on a partition of size $64\times 64$
 arising from the second order CFO scheme (\ref{min-LagrangeForm2-1}-\ref{min-LagrangeForm2-2}) and the $L_2$ error of the primal variable $\|u_h-u \|^2_{0}$ for test case 2.}
\label{fig:lambda2order-error-l2-test2}
\end{figure}

\begin{figure}
\centering
\subfigure[$\lambda_h^2$ (left, stereogram above, plane diagram below) and $\|u_h-u \|^2_{0}$ (right, stereogram above, plane diagram below) ]{
\label{Fig.sub.1.lam2order-error-test3}
\includegraphics [width=0.95\textwidth]{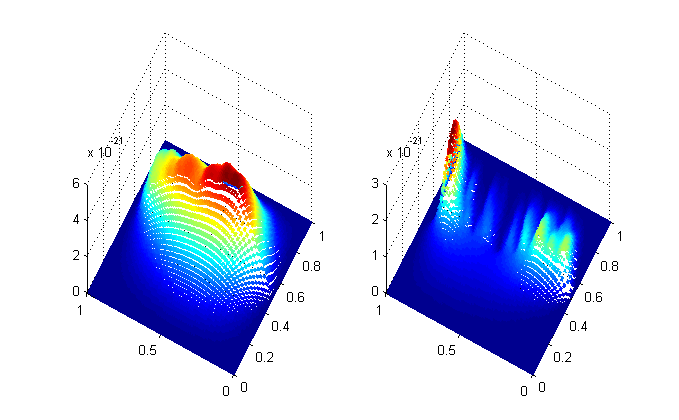}
}\\
\includegraphics [width=0.95\textwidth]{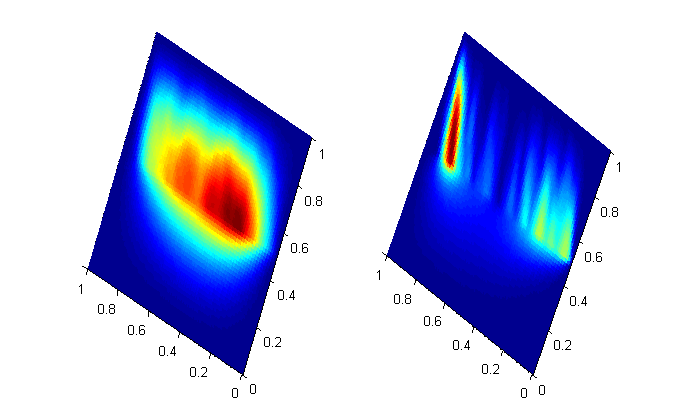}
\caption{The solution profile for the square of Lagrange multiplier $\lambda_h^2$ on a partition of size $64\times 64$
 arising from the second order CFO scheme (\ref{min-LagrangeForm2-1}-\ref{min-LagrangeForm2-2}) and the $L_2$ error of the primal variable $\|u_h-u \|^2_{0}$ for test case 3. (Since the exact solution of test case 3 is second order polyoma, $\|u_h-u \|^2_{0}$) is of machine precision, we multiple $\|u_h-u \|^2_{0}$ by $10^5$ to show it.)}
\label{fig:lambda2order-error-l2-test3}
\end{figure}

\begin{figure}
\centering
\subfigure[$\lambda_h^2$ (left, stereogram above, plane diagram below) and $\|u_h-u \|^2_{0}$ (right, stereogram above, plane diagram below) ]{
\label{Fig.sub.1.lam2order-error-test4}
\includegraphics [width=0.95\textwidth]{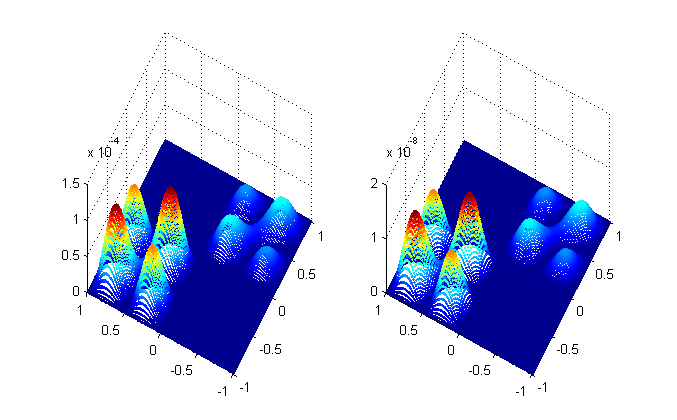}
}\\
\includegraphics [width=0.95\textwidth]{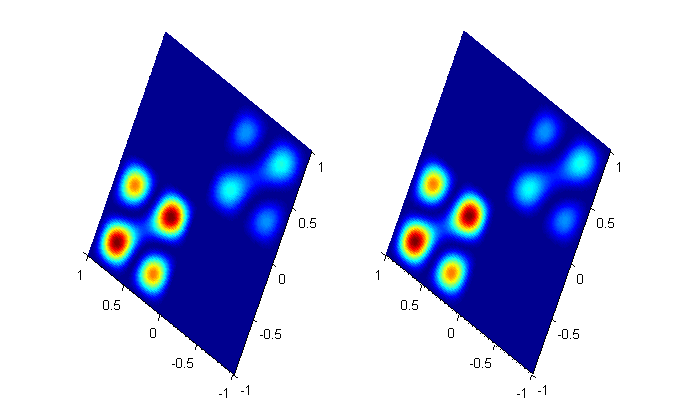}
\caption{The solution profile for the square of Lagrange multiplier $\lambda_h^2$ on a partition of size $64\times 64$
 arising from the second order CFO scheme (\ref{min-LagrangeForm2-1}-\ref{min-LagrangeForm2-2}) and the $L_2$ error of the primal variable $\|u_h-u \|^2_{0}$ for test case 4.}
\label{fig:lambda2order-error-l2-test4}
\end{figure}

\begin{figure}
\centering
\subfigure[$\lambda_h^2$ (left, stereogram above, plane diagram below) and $\|u_h-u \|^2_{0}$ (right, stereogram above, plane diagram below) ]{
\label{Fig.sub.1.lam3order-error}
\includegraphics [width=0.95\textwidth]{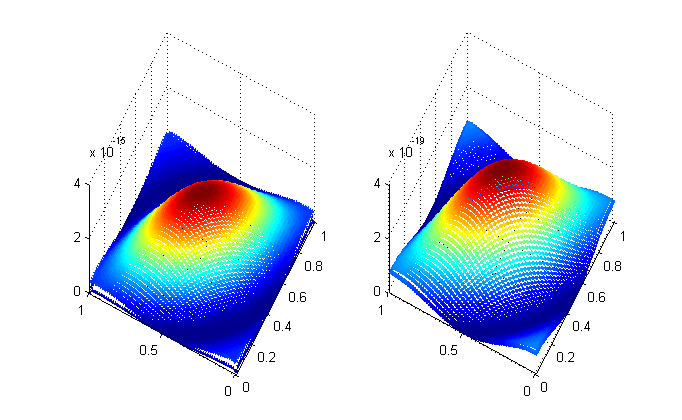}
}\\
\includegraphics [width=0.95\textwidth]{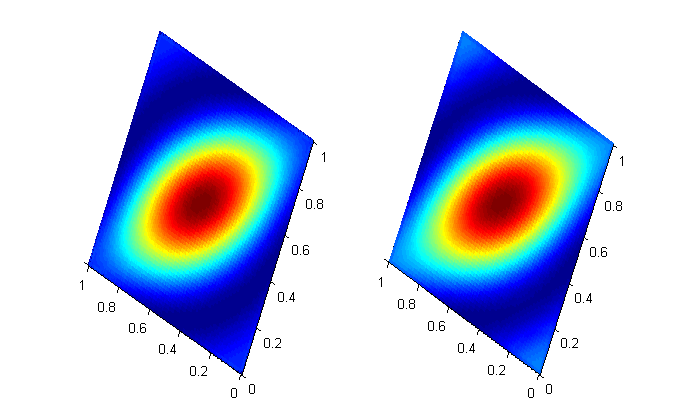}
\caption{The solution profile for the square of Lagrange multiplier $\lambda_h^2$ on a partition of size $64\times 64$
 arising from the third order CFO scheme (\ref{min-LagrangeForm2-1}-\ref{min-LagrangeForm2-2}) and the $L_2$ error of the primal variable $\|u_h-u \|^2_{0}$ for test case 1.}
\label{fig:lambda3order-error-l2}
\end{figure}

\begin{figure}
\centering
\subfigure[$\lambda_h^2$ (left, stereogram above, plane diagram below) and $\|u_h-u \|^2_{0}$ (right, stereogram above, plane diagram below) ]{
\label{Fig.sub.1.lam3order-error-test2}
\includegraphics [width=0.95\textwidth]{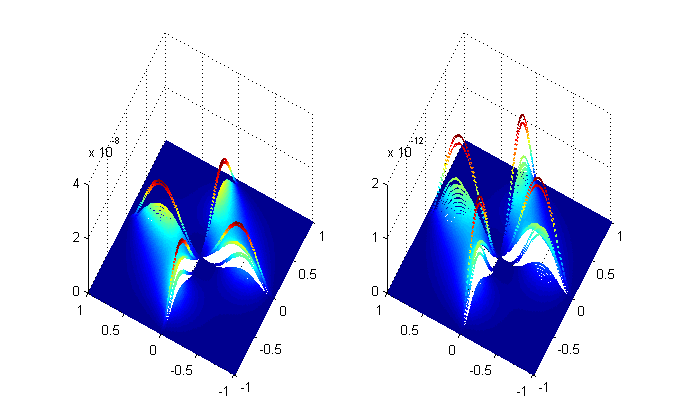}
}\\
\includegraphics [width=0.95\textwidth]{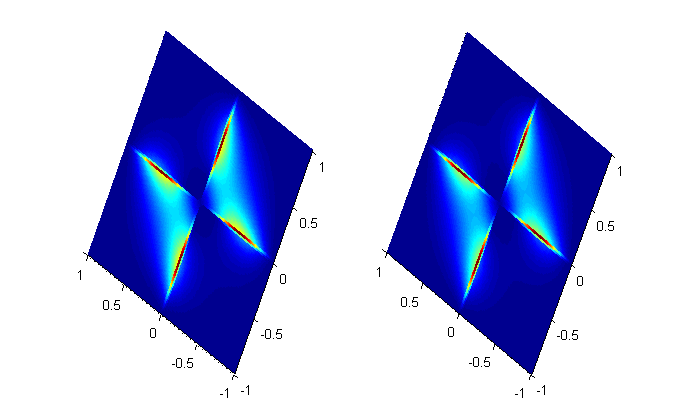}
\caption{The solution profile for the square of Lagrange multiplier $\lambda_h^2$ on a partition of size $64\times 64$
 arising from the third order CFO scheme (\ref{min-LagrangeForm2-1}-\ref{min-LagrangeForm2-2}) and the $L_2$ error of the primal variable $\|u_h-u \|^2_{0}$ for test case 2.}
\label{fig:lambda3order-error-l2-test2}
\end{figure}

\begin{figure}
\centering
\subfigure[$\lambda_h^2$ (left, stereogram above, plane diagram below) and $\|u_h-u \|^2_{0}$ (right, stereogram above, plane diagram below) ]{
\label{Fig.sub.1.lam3order-error-test3}
\includegraphics [width=0.95\textwidth]{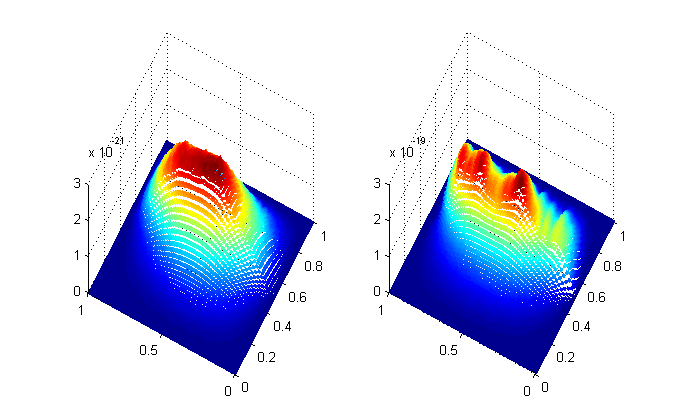}
}\\
\includegraphics [width=0.95\textwidth]{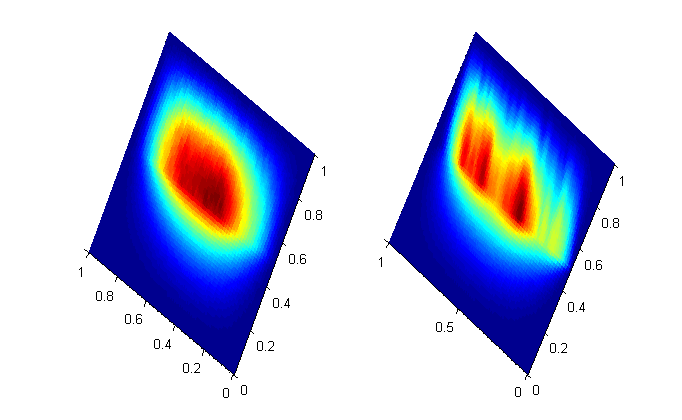}
\caption{The solution profile for the square of Lagrange multiplier $\lambda_h^2$ on a partition of size $64\times 64$
 arising from the third order CFO scheme (\ref{min-LagrangeForm2-1}-\ref{min-LagrangeForm2-2}) and the $L_2$ error of the primal variable $\|u_h-u \|^2_{0}$ for test case 3. (Since the exact solution is second order polyoma, $\|u_h-u \|^2_{0}$) is of machine precision, we multiple $\|u_h-u \|^2_{0}$ by $10^5$ to show it.)}
\label{fig:lambda3order-error-l2-test3}
\end{figure}

\begin{figure}
\centering
\subfigure[$\lambda_h^2$ (left, stereogram above, plane diagram below) and $\|u_h-u \|^2_{0}$ (right, stereogram above, plane diagram below) ]{
\label{Fig.sub.1.lam3order-error-test4}
\includegraphics [width=0.95\textwidth]{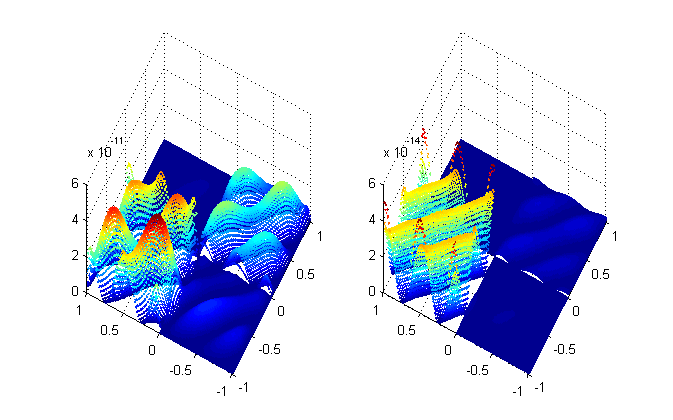}
}\\
\includegraphics [width=0.95\textwidth]{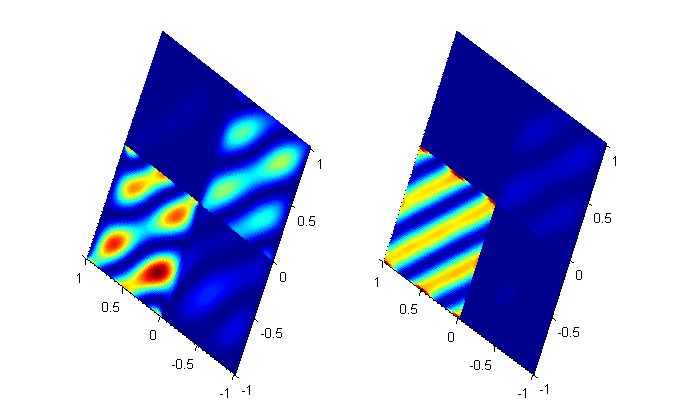}
\caption{The solution profile for the square of Lagrange multiplier $\lambda_h^2$ on a partition of size $64\times 64$
 arising from the third order CFO scheme (\ref{min-LagrangeForm2-1}-\ref{min-LagrangeForm2-2}) and the $L_2$ error of the primal variable $\|u_h-u \|^2_{0}$ for test case 4.}
\label{fig:lambda3order-error-l2-test4}
\end{figure}

\end{document}